\newtheorem{theorem}{Theorem}[section]
\newtheorem{corollary}{Corollary}[section]
\newtheorem{remark}{Remark}[section]
\newenvironment{proof}{\textit{Proof}:}{\hfill$\square$}
\numberwithin{equation}{section}
\begin{document}

	\title{Quarter-symmetric connection on an almost Hermitian manifold and on a K\"ahler manifold}
	\date{}
	\author{\textbf{Milan Lj. Zlatanovi\'c${}^1$, Miroslav D. Maksimovi\'c${}^2$}}
	\maketitle

	\noindent ${}^1${University of Ni\v s, Faculty of Sciences and Mathematics, Department of Mathematics, Ni\v s, Serbia,} \\ mail: {zlatmilan@yahoo.com}
	
	\vskip0.25cm
	\noindent ${}^2${University of Pri\v stina in Kosovska Mitrovica, Faculty of Sciences  and Mathematics, Department of Mathematics, Kosovska Mitrovica, Serbia,} mail: miroslav.maksimovic@pr.ac.rs (corresponding)

	\begin{abstract}\small 
		The paper observes an almost Hermitian manifold as an example of a generalized Riemannian manifold and examines the application of a quarter-symmetric connection on the almost Hermitian manifold. The almost Hermitian manifold with quarter-symmetric connection preserving the generalized Riemannian metric is actually the K\"ahler manifold. Observing the six linearly independent curvature tensors with respect to the quarter-symmetric connection, we construct tensors that do not depend on the quarter-symmetric connection generator. One of them coincides with the Weyl projective curvature tensor of symmetric metric $g$. Also, we obtain the relations between the Weyl projective curvature tensor and the holomorphically projective curvature tensor. Moreover, we examine the properties of curvature tensors when some tensors are hybrid. 
		
		\vskip0.25cm
		\noindent\textbf{Keywords}: Almost Hermitian manifold, curvature tensors, hybrid tensor, K\"{a}hler manifold, quarter-symmetric connection, torsion tensor
		
		\vskip0.25cm
		\noindent\textbf{MSC 2020}: 53B05, 53B35, 53C05, 53C15.
	\end{abstract}

	\section{Introduction}
	
	The paper deals with a non-symmetric linear connection, i.e. investigates the linear connection with the torsion tensor. In \cite{ivanov2016}, the authors discussed linear connections in a generalized Riemannian manifold. Among other things, they studied connections with a totally skew-symmetric torsion tensor and connections with the Einstein metricity condition (see also \cite{IZ2020}). Paper \cite{zlatanovic2021} studied the semi-symmetric metric connection and properties of the curvature tensor and determined the relations between Weyl projective curvature tensor, conformal curvature tensor, and concircular curvature tensor. S. Golab in \cite{golab1975} defined the quarter-symmetric connection as a generalization of the semi-symmetric connection. After the initial work, the theory of quarter-symmetric connection was expanded by many authors in various manifolds (for instance see \cite{bulut2019, chaturvedi2015b, chaubey2008, de2020, han2013, khan2020, tang2018}). In paper \cite{tripathi2008}, M. Tripathi introduced a new linear connection with torsion tensor in a Riemannian manifold, which generalizes several semi-symmetric and quarter-symmetric connections.

	In paper \cite{zlma2023}, the authors defined a quarter-symmetric generalized metric connection on a generalized Riemannian manifold as a connection that preserves the generalized (non-symmetric) Riemannian metric. The paper determined relations for curvature tensors and studied their skew-symmetric and cyclic-symmetric properties. This paper is a continuation of the above research, and now we will deal with the application of the quarter-symmetric connection on the almost Hermitian manifold. We will show that the almost Hermitian manifold with quarter-symmetric generalized metric connection is actually a K\"ahler manifold.  Accordingly, we will find some identities for the holomorphically projective curvature tensor and the Weyl projective curvature tensor. The \textit{holomorphically projective curvature tensor} given by equation
	\begin{equation}\label{eq:holomorphicallytensorKahler}
		\begin{split}
			\overset{g}{P} (X,Y)Z  =  \overset{g}{R} (X,Y)Z  & + \frac{1}{n+2} ( \overset{g}{R}ic (X,Z)Y - \overset{g}{R}ic (Y,Z)X )
			\\ &  - \frac{1}{n+2} (\overset{g}{R}ic (X,AZ)AY   -  \overset{g}{R}ic (Y,AZ)AX + 2\overset{g}{R}ic (X,AY)AZ)
		\end{split}
	\end{equation}
	is invariant under holomorphically projective mapping between two K\"ahler manifolds (see \cite{prvanovic2005,yano1965}). Such mapping is a natural generalization of geodesic mapping. The \textit{Weyl projective curvature tensor} given by equation
	\begin{equation}\label{eq:Weylptensor}
		\overset{g}{W} (X,Y)Z =  \overset{g}{R} (X,Y)Z +  \frac{1}{n-1}  ( \overset{g}{R}ic (X,Z)Y - \overset{g}{R}ic (Y,Z)X)
	\end{equation}
	is invariant under geodesic mapping between two Riemannian manifolds (for instance see \cite{mikes2015}). 

	\section{Preliminaries}
	
	Let $(\mathcal{M}, G=g+F)$ be a generalized Riemannian manifold, where $\mathcal{M}$ is an $n$-dimensional differentiable manifold, $G$ is a non-symmetric (0,2) tensor (the so-called generalized Riemannian metric), $g$ is the symmetric part of $G$ and $F$ is the skew-symmetric part of $G$. Tensor $A$ is defined as a tensor associated with tensor $F$, i.e. 
	\begin{equation}\label{eq:FgA}
		F(X,Y) = g(AX,Y).
	\end{equation}
	The quarter-symmetric connection $\overset{1}{\nabla}$ preserving generalized Riemannian metric $G$, $\overset{1}{\nabla} G=0$, is called \textit{quarter-symmetric generalized metric connection}  (i.e. \textit{quarter-symmetric $G$-metric connection}), and it is determined by equations (see \cite{zlma2023})
	\begin{equation}\label{eq:Q-S-Gmetric}
		\overset{1}{\nabla}_{X} Y = \overset{g}{\nabla}_{X} Y -\pi(X) A Y 
	\end{equation}
	and
	\begin{align}
		(\overset{1}{\nabla}_{X} g )(Y,Z) & =0,
		\\
		\label{eq:covderivofA-QSGMC}
		(\overset{1}{\nabla}_{X} A )Y   =(\overset{g}{\nabla}_{X} A)Y  &= 0, 
	\end{align}
	where $\pi$ is a 1-form associated with vector field $P$, i.e. $\pi(X)=g(X,P)$, and $\overset{g}{\nabla}$ is a Levi-Civita connection. A 1-form $\pi$ is called the \textit{generator} of that connection. The covariant derivative of generator $\pi$ is given by equation
	\begin{equation*}
		(\overset{1}{\nabla}_{X} \pi )(Y)  = (\overset{g}{\nabla}_{X} \pi )(Y) + \pi(X)\pi(AY).
	\end{equation*}
	The torsion tensor of connection $\overset{1}{\nabla}$ is given by equation
	\begin{equation*}
		\overset{1}{T}(X,Y)= \pi(Y) A X - \pi(X) A Y,
	\end{equation*}
	from which it follows
	\begin{equation*}
		\overset{1}{T}(X,Y,Z)= g(\overset{1}{T}(X,Y),Z)= \pi(Y) F (X,Z) - \pi(X) F (Y,Z).
	\end{equation*}
	
	The following statement gives known relations between curvature tensors $\overset{\theta}{R}$, $\theta =0,1,\dots, 5$, and Riemannian curvature tensor $\overset{g}{R}$.
	
	\begin{theorem}\label{thm:curvaturetensorsofQSGMC2}\cite{zlma2023}
		Let $(\mathcal{M}, G=g+F)$ be a generalized Riemannian manifold with the quarter-symmetric $G$-metric connection (\ref{eq:Q-S-Gmetric}). The curvature tensors $\overset{\theta}{R}$, $\theta =0,1,\dots,5$ and Riemannian curvature tensor $\overset{g}{R}$ satisfy the following relations
		\begin{align}
			\label{eq:R0XYZgm1}
			\begin{split}
				\overset{0}{R} (X,Y)Z = & \overset{g}{R} (X,Y)Z - \frac{1}{2}   ( \overset{0}{D} (X,Y) - \overset{0}{D} (Y,X)  )A Z    -  \frac{1}{2} \overset{0}{D} (X,Z) A Y + \frac{1}{2} \overset{0}{D} (Y,Z) A X 
				\\ & - \frac{1}{4} \pi (Z) (\pi (Y) A^2 X - \pi (X) A^2 Y  ),
			\end{split} \\
			\label{eq:R1XYZgm}
			\overset{1}{R} (X,Y)Z = & \overset{g}{R} (X,Y)Z  - \overset{1}{D} (X,Y)  A Z,
		\end{align}
		\begin{align}
			\label{eq:R2XYZgm1}
			\begin{split}
				\overset{2}{R} (X,Y)Z = & \overset{g}{R} (X,Y)Z  -  \overset{2}{D} (X,Z)A Y +  \overset{2}{D} (Y,Z)AX,
			\end{split} \\
			\label{eq:R3XYZgm1}
			\begin{split}
				\overset{3}{R} (X,Y)Z = & \overset{g}{R} (X,Y)Z  -  \overset{2}{D} (X,Y)A Z +  \overset{3}{D} (Y,Z)AX,
			\end{split} \\
			\label{eq:R4XYZgm1}
			\begin{split}
				\overset{4}{R} (X,Y)Z = & \overset{g}{R} (X,Y)Z  -  \overset{3}{D} (X,Y)A Z +  \overset{3}{D} (Y,Z)AX - \pi (Z) (\pi (Y) A^2 X - \pi (X) A^2 Y  ),
			\end{split} \\
			\label{eq:R5XYZgm1}
			\begin{split}
				\overset{5}{R} (X,Y)Z = & \overset{g}{R} (X,Y)Z - \frac{1}{2}  ( \overset{2}{D} (X,Y) - \overset{3}{D} (Y,X)  )A Z    -  \frac{1}{2} \overset{3}{D} (X,Z) A Y + \frac{1}{2} \overset{2}{D} (Y,Z) A X
				\\ & + \frac{1}{2} \pi (Y) (\pi (X) A^2 Z - \pi (Z) A^2 X  ),
			\end{split}
		\end{align}
		where
		\begin{align}
			\label{eq:D0}
			\overset{0}{D} (X,Y) & = (\overset{g}{\nabla}_{X} \pi)(Y) + \frac{1}{2} \pi(X)\pi(AY) + \frac{1}{2} \pi(Y)\pi(AX), \\
			\label{eq:D1}
			\overset{1}{D} (X,Y) & =  (  \overset{g}{\nabla}_{X} \pi  )(Y) - ( \overset{g}{\nabla}_{Y}\pi  )(X), \\
			\label{eq:D2}
			\overset{2}{D} (X,Y) & = (\overset{g}{\nabla}_{X} \pi)(Y) + \pi(Y)\pi(AX), \\
			\label{eq:D3}
			\overset{3}{D} (X,Y) & = (\overset{g}{\nabla}_{X} \pi)(Y) + \pi(X)\pi(AY)= (\overset{1}{\nabla}_{X} \pi)(Y).
		\end{align}
	\end{theorem}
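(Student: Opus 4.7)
The plan is to derive each of the six relations by direct substitution of the defining equation \eqref{eq:Q-S-Gmetric} into the respective definitions of the curvature tensors $\overset{\theta}{R}$. In a manifold equipped with a non-symmetric linear connection, six linearly independent curvature tensors arise from admissible permutations of the two covariant-derivative arguments together with the choice of either an ordinary Lie bracket or a torsion-corrected bracket in the last term. I would first recall these six definitions from \cite{zlma2023}, so that the template $\overset{1}{\nabla}_X\overset{1}{\nabla}_Y Z - (\text{permuted})- (\text{commutator})$ is fixed for each $\theta$.

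The main engine of the calculation is the substitution $\overset{1}{\nabla}_X Y=\overset{g}{\nabla}_X Y-\pi(X)AY$ applied twice. Plugging this into the standard expression $\overset{1}{R}(X,Y)Z=\overset{1}{\nabla}_X\overset{1}{\nabla}_Y Z-\overset{1}{\nabla}_Y\overset{1}{\nabla}_X Z-\overset{1}{\nabla}_{[X,Y]}Z$ produces four groups of terms: the purely Levi-Civita terms assembling into $\overset{g}{R}(X,Y)Z$; cross terms of the form $(\overset{g}{\nabla}_X\pi)(Y)\,AZ$ and $\pi(X)(\overset{g}{\nabla}_Y A)Z$; quadratic $\pi$-terms of the form $\pi(X)\pi(Y)A^2Z$; and the Levi-Civita bracket term. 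The parallelism relation $(\overset{g}{\nabla}_X A)Y=0$ from \eqref{eq:covderivofA-QSGMC} kills the $(\overset{g}{\nabla} A)$ contributions, while the bracket term $-\overset{1}{\nabla}_{[X,Y]}Z$ contributes exactly the antisymmetrization $(\overset{g}{\nabla}_X\pi)(Y)-(\overset{g}{\nabla}_Y\pi)(X)=\overset{1}{D}(X,Y)$ from \eqref{eq:D1}, yielding \eqref{eq:R1XYZgm}.

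For the remaining five cases, the same template applies: the only differences are which inner arguments of $\overset{1}{\nabla}_\bullet\overset{1}{\nabla}_\bullet\bullet$ are permuted, and whether the bracket subtracted is the Lie bracket or the torsion $\overset{1}{T}$. After substitution, the scalar factors in front of $AZ$, $AY$, $AX$ come from collecting $(\overset{g}{\nabla}_\bullet\pi)(\bullet)$ and $\pi(\bullet)\pi(A\bullet)$ contributions; matching these with the definitions \eqref{eq:D0}--\eqref{eq:D3} one identifies which $\overset{\theta}{D}$ appears in each slot, and the residual quadratic terms yield the $\pi(\cdot)A^2\cdot$ corrections in \eqref{eq:R0XYZgm1}, \eqref{eq:R4XYZgm1} and \eqref{eq:R5XYZgm1}.

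The main obstacle is purely organizational: keeping a clean bookkeeping of which ordering convention underlies each $\overset{\theta}{R}$ and ensuring that the coefficients $\tfrac{1}{2}$ and $\tfrac{1}{4}$ in \eqref{eq:R0XYZgm1} and \eqref{eq:R5XYZgm1} emerge correctly from the relevant (anti)symmetrizations. No genuinely new identity beyond $\overset{g}{\nabla}A=0$ is required; the remaining work is disciplined expansion and comparison against \eqref{eq:D0}--\eqref{eq:D3}.
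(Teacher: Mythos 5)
The paper itself offers no proof of this theorem --- it is quoted verbatim from \cite{zlma2023} --- and your plan of substituting $\overset{1}{\nabla}_{X} Y = \overset{g}{\nabla}_{X} Y -\pi(X) A Y$ into the six Min\v{c}i\'c-type curvature expressions, using $(\overset{g}{\nabla}_{X} A)Y=0$ from (\ref{eq:covderivofA-QSGMC}) to discard the $\overset{g}{\nabla}A$ cross terms and matching the surviving $(\overset{g}{\nabla}_{\bullet}\pi)(\bullet)$ and $\pi(\bullet)\pi(A\bullet)$ combinations against (\ref{eq:D0})--(\ref{eq:D3}), is exactly the standard computation carried out there. Your worked sketch of the first-kind case is correct, and nothing in the template would fail for the other five, so this is essentially the same approach.
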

	
	The corresponding (0,4) curvature tensors are defined by relations
	\begin{equation*}
		\overset{\theta}{R} (X,Y,Z,W) = g(\overset{\theta}{R} (X,Y)Z,W), \theta=0,1,\dots,5 \;\;\; \mbox{and} \;\;\; \overset{g}{R} (X,Y,Z,W) = g(\overset{g}{R} (X,Y)Z,W).
	\end{equation*}
	The corresponding Ricci tensors are defined by relations
	\begin{equation*}
		\overset{\theta}{R}ic (Y,Z) = Trace\{ X\rightarrow \overset{\theta}{R} (X,Y)Z \}, \theta=0,1,\dots,5 \;\;\; \mbox{and} \;\;\; \overset{g}{R}ic (Y,Z) = Trace\{ X\rightarrow \overset{g}{R} (X,Y)Z \}.
	\end{equation*}

	\section{Almost Hermitian manifolds}
	
	
	Depending on the properties of tensor $A$, we can observe various examples of the generalized Riemannian manifold (see \cite{ivanov2016}). An \textit{almost Hermitian manifold} $(\mathcal{M}, g, A)$ is an $n$-dimensional Riemannian manifold $(\mathcal{M}, g)$ (where $n=2k\geq 4$) equipped with almost complex structure $A$ which satisfies
	\begin{equation}\label{eq:conditionsforAHM}
		A^2 = - I,\ g(AX,AY)=g(X,Y).
	\end{equation}
	The fundamental 2-form $F$ (the so-called the \textit{K\"ahler form}) is defined by $F(X,Y)=g(AX,Y)$. The following equations also apply to the almost Hermitian manifold
	\begin{equation}\label{eq:FAXY}
		F(AX,Y)= -F(X,AY) = -g(X,Y) \;\;\; \mbox{and} \;\;\; F(AX,AY)=F(X,Y).
	\end{equation}
	From equations (\ref{eq:FgA}), (\ref{eq:conditionsforAHM}), (\ref{eq:FAXY}), we conclude that
	\begin{equation}\label{eq:GAXY}
		G(AX,Y)=-G(X,AY)=-G(Y,X)  \;\;\; \mbox{and} \;\;\; G(AX,AY)=G(X,Y).
	\end{equation}
	
	An almost Hermitian manifold $(\mathcal{M}, g, A)$ can be considered as a generalized Riemannian manifold $(\mathcal{M}, G=g+F)$ that satisfies relations (\ref{eq:conditionsforAHM}) (see \cite{ivanov2016,prvanovic1995}), where the skew-symmetric part $F$ of basic tensor $G$ is defined with $F(X,Y)=g(AX,Y)$. We will observe an almost Hermitian manifold $(\mathcal{M}, g, A)$ with a quarter-symmetric $G$-metric connection (\ref{eq:Q-S-Gmetric}).  Actually, such a connection preserves the almost Hermitian structure $(g, A)$, i.e. $\overset{1}{\nabla} g =\overset{1}{\nabla} A =0$. A linear connection that preserves the almost Hermitian structure is called the \textit{almost Hermitian connection} (also known as a \textit{natural connection}) (for instance, see \cite{gauduchon1997,yu2016}), which means that quarter-symmetric $G$-metric connection is an almost Hermitian connection.
	
	\begin{theorem}
		The torsion tensor $\overset{1}{T}$ of quarter-symmetric connection (\ref{eq:Q-S-Gmetric}) on an almost Hermitian manifold satisfies the following relations
		\begin{align*} 
			A\overset{1}{T}(AX,AY) & =A\overset{1}{T}(X,Y) - \overset{1}{T}(AX,Y) - \overset{1}{T}(X,AY),	
			\\ 
			\overset{1}{T}(X,Y,Z) & = \overset{1}{T}(AX,AY,Z) + \overset{1}{T}(AX,Y,AZ) + \overset{1}{T}(X,AY,AZ),
			\\ 
			\underset{XYZ}{\sigma} \overset{1}{T}(X,Y,Z) & = \underset{XYZ}{\sigma} (\overset{1}{T}(AX,Y,AZ) + \overset{1}{T}(X,AY,AZ)), 	
		\end{align*}
		where $\underset{XYZ}{\sigma}$ denote the cyclic sum with respect to the vector fields $X,Y,Z$.	
	\end{theorem}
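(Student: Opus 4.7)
The plan is to exploit the explicit formula for the torsion,
\[
\overset{1}{T}(X,Y)=\pi(Y)AX-\pi(X)AY,\qquad
\overset{1}{T}(X,Y,Z)=\pi(Y)F(X,Z)-\pi(X)F(Y,Z),
\]
together with the almost-Hermitian identities $A^{2}=-I$, $g(AX,AY)=g(X,Y)$ and the consequences $F(AX,AY)=F(X,Y)$, $F(AX,Y)=-F(X,AY)=-g(X,Y)$. Everything reduces to direct substitution; the only real trick is recognizing a hidden symmetry in the third identity.

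For the first identity, I would compute $A\overset{1}{T}(AX,AY)$ by applying the formula twice and using $A^{2}=-I$; this yields $\pi(AX)AY-\pi(AY)AX$. Then I would expand the right-hand side term by term: $A\overset{1}{T}(X,Y)=-\pi(Y)X+\pi(X)Y$, $\overset{1}{T}(AX,Y)=-\pi(Y)X-\pi(AX)AY$, and $\overset{1}{T}(X,AY)=\pi(AY)AX+\pi(X)Y$. Summing with the prescribed signs makes the four terms involving $\pi(X)Y$ and $\pi(Y)X$ cancel and leaves exactly $\pi(AX)AY-\pi(AY)AX$, matching the left-hand side.

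For the second identity, I would expand each of the three $(0,3)$ torsion terms on the right-hand side. Using $F(AX,AZ)=F(X,Z)$ in the second term and $F(AY,AZ)=F(Y,Z)$ in the third, the surviving "unpaired" pieces are $\pi(AY)F(AX,Z)+\pi(AY)F(X,AZ)$ and $-\pi(AX)F(AY,Z)-\pi(AX)F(Y,AZ)$. Both brackets vanish because $F(AX,Z)=-F(X,AZ)$, leaving only $\pi(Y)F(X,Z)-\pi(X)F(Y,Z)=\overset{1}{T}(X,Y,Z)$.

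For the third identity, I would apply the cyclic sum $\underset{XYZ}{\sigma}$ to the identity just proved; the claim then follows once we show
\[
\underset{XYZ}{\sigma}\overset{1}{T}(AX,AY,Z)=0.
\]
This is where the key observation lies: $F(AX,Z)=-g(X,Z)$ is symmetric in $X$ and $Z$, so $F(AX,Z)=F(AZ,X)$. Expanding the cyclic sum and grouping the six terms by the argument of $\pi$ (i.e. $\pi(AX)$, $\pi(AY)$, $\pi(AZ)$), each pair collapses to a difference of the form $F(AU,V)-F(AV,U)=0$. The main obstacle — and it is a small one — is precisely spotting this symmetry of $F\circ A$; once it is noted, every step is a mechanical identification of canceling terms.
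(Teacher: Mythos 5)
Your proposal is correct and takes essentially the same approach as the paper: the paper's proof is a one-line remark that the identities follow by direct substitution using $A^{2}=-I$ and the relations $F(AX,Y)=-F(X,AY)=-g(X,Y)$, $F(AX,AY)=F(X,Y)$, which is exactly the computation you carry out in detail. All three verifications check out, including the key cancellation $\underset{XYZ}{\sigma}\overset{1}{T}(AX,AY,Z)=0$ obtained from the symmetry $F(AX,Z)=F(AZ,X)=-g(X,Z)$.
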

	\begin{proof} 
		These relations can be proven by using the property of skew-symmetric 2-form $F$ in almost Herimitian manifolds, i.e. by using equations (\ref{eq:conditionsforAHM}) and (\ref{eq:FAXY}). 
		
	\end{proof}
	
	An almost Hermitian manifold is a \textit{K\"ahler manifold} if $\overset{g}{\nabla} A =0$. From equation (\ref{eq:covderivofA-QSGMC}), we see that structure tensor $A$ is parallel with respect to the Levi-Civita connection, and it implies the following statement.
	
	\begin{theorem}\label{thm:AHMwithQ-S-GmetricisKn}
		The almost Hermitian manifold $(\mathcal{M}, g, A)$ with quarter-symmetric connection (\ref{eq:Q-S-Gmetric}) preserving the generalized Riemannian metric $G$ is the K\"ahler manifold.
	\end{theorem}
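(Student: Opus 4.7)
The plan is essentially to invoke equation (\ref{eq:covderivofA-QSGMC}) directly, since the Kähler condition for an almost Hermitian manifold is, by definition, exactly the parallelism of $A$ with respect to $\overset{g}{\nabla}$. To make the chain of implications explicit, I would separate the content of (\ref{eq:covderivofA-QSGMC}) into two steps: (a) the quarter-symmetric $G$-metric connection preserves the almost complex structure, $\overset{1}{\nabla} A = 0$; and (b) this preservation transfers to the Levi-Civita connection, $(\overset{g}{\nabla}_X A)Y = (\overset{1}{\nabla}_X A)Y$.

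For step (a), I would split $\overset{1}{\nabla} G = 0$ into $\overset{1}{\nabla} g = 0$ and $\overset{1}{\nabla} F = 0$, then combine these with $F(X,Y) = g(AX,Y)$. Expanding $(\overset{1}{\nabla}_X F)(Y,Z) = 0$ using the Leibniz rule and subtracting $(\overset{1}{\nabla}_X g)(AY,Z) = 0$ isolates $g((\overset{1}{\nabla}_X A)Y, Z) = 0$, and nondegeneracy of $g$ finishes the argument. For step (b), I would plug the explicit formula (\ref{eq:Q-S-Gmetric}) into $(\overset{1}{\nabla}_X A)Y = \overset{1}{\nabla}_X(AY) - A(\overset{1}{\nabla}_X Y)$; the two correction terms $-\pi(X) A^2 Y$ produced by the two occurrences of $\overset{1}{\nabla}$ cancel, leaving precisely $(\overset{g}{\nabla}_X A)Y$. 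Combining (a) and (b) yields $\overset{g}{\nabla} A = 0$.

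Since $(\mathcal{M}, g, A)$ is an almost Hermitian manifold whose almost complex structure is parallel with respect to the Levi-Civita connection, it is by definition a Kähler manifold, and the theorem follows. There is no real obstacle here: the only substantive calculation is the cancellation in step (b), which amounts to observing that the quarter-symmetric correction term $-\pi(X)\,A\,\cdot\,$ in (\ref{eq:Q-S-Gmetric}) commutes with $A$, so it contributes nothing to the commutator measuring $\overset{1}{\nabla}_X \circ A - A \circ \overset{1}{\nabla}_X$.
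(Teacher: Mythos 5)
Your proposal is correct and follows the same route as the paper, which simply observes that equation (\ref{eq:covderivofA-QSGMC}) gives $\overset{g}{\nabla}A=0$ and hence the K\"ahler condition. The only difference is that you additionally derive (\ref{eq:covderivofA-QSGMC}) from $\overset{1}{\nabla}G=0$ and formula (\ref{eq:Q-S-Gmetric}), whereas the paper takes it as an already-established property of the quarter-symmetric $G$-metric connection quoted from the cited prior work; both of your steps (a) and (b) check out.
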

	
	Following the previous theorem, further consideration will be related with the K\"ahler manifold. For this manifold, the term "generalized metric (i.e. $G$-metric) connection" is equivalent to the term "metric $A$-connection" (note that many papers use the term "metric $F$-connection", as $F$ is used to denote (1,1) structure tensor).
	
	The Riemannian curvature tensor $\overset{g}{R}$ on the K\"ahler manifold $(\mathcal{M}, g, A)$ satisfies the following relations (for instance see \cite{chaturvedi2015,mikes2015})
	\begin{align}
		\label{eq:relation1forRg}
		\overset{g}{R} (X,Y)AZ & = A\overset{g}{R} (X,Y)Z, \\
		\label{eq:relation2forRg}
		\overset{g}{R} (X,Y,AZ,AW) & = \overset{g}{R} (AX,AY,Z,W), \\
		\label{eq:relation3forRg}
		\overset{g}{R} (X,AY,AZ,W) & = \overset{g}{R} (AX,Y,Z,AW), \\
		\label{eq:relation4forRg}
		\overset{g}{R} (AX,AY,AZ,AW) & = \overset{g}{R} (X,Y,Z,W), \\
		\label{eq:relation5forRg}
		\overset{g}{R} (X,Y,Z,AW) & = - \overset{g}{R} (X,Y,AZ,W).
	\end{align}

	Moreover, if the (0,2) type tensor $B$ is \textit{hybrid}, then it holds (for instance see \cite{ozdemiryildirim2005}, \cite{vishnevskii1985} pp. 31)
	\begin{equation*}
		B(AX,Y)=-B(X,AY).
	\end{equation*}
	On the K\"ahler manifold, we have
	\begin{equation*}
		B(AX,AY)=B(X,Y).
	\end{equation*}
	For example, on the K\"ahler manifold, tensors $g$ and $F$ are hybrid (see \cite{yano1965} pp. 192), from which it follows that generalized Riemannian metric $G$ is hybrid (this is shown by equations (\ref{eq:conditionsforAHM}), (\ref{eq:FAXY}) and (\ref{eq:GAXY})). Also, on the K\"ahler manifold, the Ricci tensor $\overset{g}{R}ic$ is a hybrid tensor (see \cite{yano1965} pp. 68), i.e. satisfies relation 
	\begin{equation}\label{eq:Riccihybrid}
		\overset{g}{R}ic(X,AY) = - \overset{g}{R}ic(AX,Y).
	\end{equation}
	
	In the following theorem, we state the results we will use to study curvature tensor properties.
	
	\begin{theorem} Let $(\mathcal{M}, g, A)$ be a K\"ahler manifold.
		\begin{itemize}
			\item[(1)] If $\overset{g}{\nabla} \pi$ is hybrid tensor, then $\overset{1}{D}$ is also hybrid, i.e. it holds that
			\begin{equation}\label{eq:Dhybrid}
				\overset{1}{D}(AX,Y)= - \overset{1}{D}(X,AY), \; \overset{1}{D}(AX,AY)= \overset{1}{D}(X,Y),
			\end{equation}
			where $\overset{1}{D}$ is given by (\ref{eq:D1}).
			\item[(2)] If $\overset{g}{\nabla} \pi$ and $\pi\otimes\pi$ are hybrid tensors, then $\overset{\theta}{D}$ are hybrid, $\theta=0,2,3$, i.e. it holds that
			\begin{equation}\label{eq:thetaDhybrid}
				\overset{\theta}{D}(AX,Y)= - \overset{\theta}{D}(X,AY), \; \overset{\theta}{D}(AX,AY)= \overset{\theta}{D}(X,Y), \; \theta=0,2,3,
			\end{equation}
			where $\overset{\theta}{D}$ are given by (\ref{eq:D0}), (\ref{eq:D2}), (\ref{eq:D3}).
		\end{itemize}
	\end{theorem}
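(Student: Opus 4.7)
The plan is to verify both parts by direct substitution of $AX$ (and, where needed, $AY$) into the defining formulas (\ref{eq:D0})--(\ref{eq:D3}) and then applying the stated hypotheses together with the K\"ahler identity $A^2=-I$. A useful preliminary observation is that for any $(0,2)$ tensor $B$ on a K\"ahler manifold, the second identity $B(AX,AY)=B(X,Y)$ follows from the first identity $B(AX,Y)=-B(X,AY)$ by substituting $Y\mapsto AY$ and using $A^2Y=-Y$. Consequently, for each $\overset{\theta}{D}$ it suffices to establish the antisymmetry $\overset{\theta}{D}(AX,Y)=-\overset{\theta}{D}(X,AY)$.

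For part (1), I would expand $\overset{1}{D}(AX,Y)=(\overset{g}{\nabla}_{AX}\pi)(Y)-(\overset{g}{\nabla}_Y\pi)(AX)$. The hypothesis that $\overset{g}{\nabla}\pi$ is hybrid converts the first summand into $-(\overset{g}{\nabla}_X\pi)(AY)$ directly; for the second, the same hybridity applied in the order $(Y,AX)\mapsto(AY,X)$ rewrites it as $+(\overset{g}{\nabla}_{AY}\pi)(X)$. Recombining the two transformed summands produces precisely $-\overset{1}{D}(X,AY)$, which yields (\ref{eq:Dhybrid}).

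For part (2), I would proceed analogously for each $\theta\in\{0,2,3\}$. The Levi-Civita derivative portions of $\overset{\theta}{D}$ are handled by hybridity of $\overset{g}{\nabla}\pi$ exactly as in part (1). The remaining quadratic $\pi$-terms generate factors of the form $\pi(A^2X)$ after the substitution, which collapse to $-\pi(X)$ via the K\"ahler identity. The hybridity of $\pi\otimes\pi$ supplies both $\pi(AX)\pi(Y)=-\pi(X)\pi(AY)$ and its companion $\pi(AX)\pi(AY)=\pi(X)\pi(Y)$, and these are precisely the rewrites needed so that the surviving quadratic cross-terms reorganize into the right-hand side $-\overset{\theta}{D}(X,AY)$.

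The verification is essentially an exercise in careful bookkeeping, so I do not expect a substantial obstacle. The only mildly delicate point lies in part (2), where one must ensure that the interplay between $\pi(A^2X)=-\pi(X)$ and the two hybrid-consequence identities for $\pi\otimes\pi$ produces the exact cancellations simultaneously for all three tensors $\overset{0}{D},\overset{2}{D},\overset{3}{D}$, whose $\pi$-quadratic parts differ only in how the slots $X$ and $Y$ are paired with the action of $A$.
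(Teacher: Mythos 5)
Your proposal is correct and follows essentially the same route as the paper: expand each $\overset{\theta}{D}(AX,Y)+\overset{\theta}{D}(X,AY)$ from the defining formulas, cancel the covariant-derivative terms by hybridity of $\overset{g}{\nabla}\pi$ and the quadratic terms by $\pi(A^2X)=-\pi(X)$ together with hybridity of $\pi\otimes\pi$, then obtain the second identity from the first by composing with $A$ (the paper substitutes $X\mapsto AX$ where you substitute $Y\mapsto AY$, which is immaterial). The paper writes out only the representative case $\theta=2$ and your verification of that case matches its computation exactly.
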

	\begin{proof}
		We will prove equations (\ref{eq:thetaDhybrid}) for $\theta=2$.
		From equation (\ref{eq:D2}), on the K\"ahler manifold, we have
		\begin{equation}\label{eq:D2AXY+D2XAY}
			\overset{2}{D}(AX,Y) + \overset{2}{D}(X,AY) = (\overset{g}{\nabla}_{AX} \pi)(Y) + (\overset{g}{\nabla}_{X} \pi)(AY) + \pi(AX)\pi(AY) - \pi(X)\pi(Y).
		\end{equation}
		If $\overset{g}{\nabla} \pi$ and $\pi\otimes\pi$ are hybrid tensors, then it holds
		\begin{equation}\label{eq:hybrid1}
			(\overset{g}{\nabla}_{AX} \pi)(Y) =- (\overset{g}{\nabla}_{X} \pi)(AY), \; \pi(AX)\pi(Y) = - \pi(X)\pi(AY) 
		\end{equation}
		and 
		\begin{equation}\label{eq:hybrid2}
			(\overset{g}{\nabla}_{AX} \pi)(AY) = (\overset{g}{\nabla}_{X} \pi)(Y), \; \pi(AX)\pi(AY) = \pi(X)\pi(Y). 
		\end{equation}
		
		Applying equations (\ref{eq:hybrid1}) and (\ref{eq:hybrid2}) to (\ref{eq:D2AXY+D2XAY}), we get
		\begin{equation*}
			\overset{2}{D}(AX,Y) + \overset{2}{D}(X,AY) =0.
		\end{equation*}
		If we replace $X$ with $AX$ in the previous equation and using $A^2=-I$, we obtain the second equation of (\ref{eq:thetaDhybrid}).
	\end{proof}

	\begin{remark}
		Theorem \ref{thm:AHMwithQ-S-GmetricisKn} is the equivalent form of Theorem 4.2 in \cite{yano1982a}: In order that the covariant derivative of the complex structure tensor of a Hermitian manifold with respect to the quarter-symmetric metric connection vanish, it is necessary and sufficient that the Hermitian manifold be a K\"ahler manifold.
	\end{remark}
	
	\section{Curvature properties of quarter-symmetric connection on K\"ahler manifold}
	
	In this section, we will consider the properties of the curvature tensor on the K\"ahler manifold with a quarter-symmetric connection (\ref{eq:Q-S-Gmetric}). The papers \cite{bhowmik2010, dubey2010, mishrapandey1980, yano1982, rastogi1986} studied the quarter-symmetric connection on the K\"ahler manifold. For example, in paper \cite{yano1982}, K. Yano and T. Imai proved that the K\"ahler manifold with a quarter-symmetric  metric $A$-connection (\ref{eq:Q-S-Gmetric}) is flat if curvature tensor $\overset{1}{R}$ vanishes. 
	
	Using the linearly independent curvature tensors with respect to quarter-symmetric connection (\ref{eq:Q-S-Gmetric}), below we will construct the tensors that are independent of the choice of quarter-symmetric connection generator.
	
	\subsection{Curvature tensor of the first kind}
	
	The curvature tensor of the first kind on the K\"ahler manifold with a quarter-symmetric metric $A$-connection (\ref{eq:Q-S-Gmetric}) is given by equation
	\begin{equation}\label{eq:R1XYZAHM}
		\overset{1}{R} (X,Y)Z = \overset{g}{R} (X,Y)Z  -  \overset{1}{D}(X,Y) A Z,   
	\end{equation}
	where $\overset{1}{D}$ is tensor given by (\ref{eq:D1}). By contracting with respect to vector field $X$ in equation (\ref{eq:R1XYZAHM}), we obtain
	\begin{equation*}
		\overset{1}{R}ic (Y,Z) = \overset{g}{R}ic(Y,Z)  -  \overset{1}{D}(AZ,Y). 
	\end{equation*}
	If we replace $Z$ with $AZ$ in the previous equation, we have
	\begin{equation*}
		\overset{1}{R}ic (Y,AZ) = \overset{g}{R}ic(Y,AZ)  -  \overset{1}{D}(A^2 Z,Y), 
	\end{equation*}
	from which we obtain
	\begin{equation}\label{eq:D1Ric1}
		\overset{1}{D}(Z,Y) =
		\overset{1}{R}ic (Y,AZ) - \overset{g}{R}ic(Y,AZ). 
	\end{equation}
	By substituting (\ref{eq:D1Ric1}) into (\ref{eq:R1XYZAHM}), we obtain
	\begin{equation*}
		\overset{1}{R} (X,Y)Z = \overset{g}{R} (X,Y)Z  -  (  \overset{1}{R}ic (Y,AX) - \overset{g}{R}ic(Y,AX)   ) A Z.   
	\end{equation*}
	By separating the elements of connections $\overset{1}{\nabla}$ and $\overset{g}{\nabla}$, we get the relation
	\begin{equation}\label{eq:R1XYZgmAHM2}
		\overset{1}{R} (X,Y)Z + \overset{1}{R}ic (Y,AX) AZ = \overset{g}{R} (X,Y)Z  + \overset{g}{R}ic(Y,AX)  A Z   
	\end{equation}
	and based on that, we will formulate the following theorem.
	\begin{theorem}
		Let $(\mathcal{M}, g, A)$ be a K\"ahler manifold with a quarter-symmetric metric $A$-connection (\ref{eq:Q-S-Gmetric}). Tensor
		\begin{equation}\label{eq:K1XYZgmAHM}
			\overset{1}{H} (X,Y)Z = \overset{1}{R} (X,Y)Z + \overset{1}{R}ic (Y,AX) AZ 
		\end{equation}
		is independent of generator $\pi$.
	\end{theorem}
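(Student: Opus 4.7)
The plan is to show that $\overset{1}{H}(X,Y)Z$ can be rewritten entirely in terms of Levi-Civita quantities ($\overset{g}{R}$ and $\overset{g}{R}ic$) together with the structure tensor $A$, none of which involve the generator $\pi$. The key idea is to eliminate the auxiliary tensor $\overset{1}{D}$ from the curvature relation (\ref{eq:R1XYZAHM}) by expressing it through the two Ricci tensors $\overset{1}{R}ic$ and $\overset{g}{R}ic$.

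First, I would contract (\ref{eq:R1XYZAHM}) on the vector field $X$. The trace of the linear operator $X \mapsto \overset{1}{D}(X,Y)AZ$ is, for fixed $Y$ and $Z$, the value of the $1$-form $\overset{1}{D}(\cdot,Y)$ on the fixed vector $AZ$, i.e.\ $\overset{1}{D}(AZ,Y)$. This yields
\begin{equation*}
\overset{1}{R}ic(Y,Z) = \overset{g}{R}ic(Y,Z) - \overset{1}{D}(AZ,Y).
\end{equation*}
Next, replacing $Z$ by $AZ$ and invoking $A^2 = -I$ from (\ref{eq:conditionsforAHM}) gives an invertible relation that can be solved for $\overset{1}{D}(Z,Y)$, reproducing (\ref{eq:D1Ric1}).

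Substituting this expression for $\overset{1}{D}(X,Y)$ back into (\ref{eq:R1XYZAHM}) and moving the $\overset{1}{R}ic$ contribution to the left-hand side yields precisely identity (\ref{eq:R1XYZgmAHM2}). The left-hand side of (\ref{eq:R1XYZgmAHM2}) is the tensor $\overset{1}{H}(X,Y)Z$ defined in (\ref{eq:K1XYZgmAHM}), while the right-hand side $\overset{g}{R}(X,Y)Z + \overset{g}{R}ic(Y,AX)AZ$ is built only from the Riemannian curvature of $g$, its Ricci contraction, and the almost complex structure $A$, and thus is manifestly independent of $\pi$. This establishes the claim.

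There is no genuine obstacle here, since the essential algebraic manipulation has already been carried out in the paragraph preceding the theorem. The only points requiring care are the trace computation (justifying the factor $\overset{1}{D}(AZ,Y)$) and the use of $A^2=-I$ to invert the substitution $Z \mapsto AZ$; after those, the conclusion is a one-line observation that the right-hand side of (\ref{eq:R1XYZgmAHM2}) contains no $\pi$-dependent term.
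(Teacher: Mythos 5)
Your proposal is correct and follows exactly the argument the paper itself gives in the paragraph preceding the theorem: contract (\ref{eq:R1XYZAHM}) over $X$ to express $\overset{1}{D}$ via (\ref{eq:D1Ric1}) using $A^2=-I$, substitute back, and read off that the right-hand side of (\ref{eq:R1XYZgmAHM2}) involves only $\overset{g}{R}$, $\overset{g}{R}ic$ and $A$. No discrepancies to report.
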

	
	In this part, we will also deal with some other properties of the curvature tensors on the K\"ahler manifold, depending on the quarter-symmetric connection generator properties. We now state the properties of curvature tensors of the first kind.
	
	\begin{theorem}
		Let $(\mathcal{M}, g, A)$ be a K\"ahler manifold with a quarter-symmetric metric $A$-connection (\ref{eq:Q-S-Gmetric}).
		\begin{itemize}
			\item[(1)] If $\overset{g}{\nabla} \pi$ is hybrid, then the curvature tensor of the first kind and structure tensor $A$ satisfies the following relations
			\begin{align*}
				\overset{1}{R} (X,Y,AZ,AW) & = \overset{1}{R} (AX,AY,Z,W), \\
				\overset{1}{R} (X,AY,AZ,W) & = \overset{1}{R} (AX,Y,Z,AW), \\
				\overset{1}{R} (AX,AY,AZ,AW) & = \overset{1}{R} (X,Y,Z,W). 
			\end{align*}
			
			\item[(2)] The curvature tensor of the first kind and structure tensor $A$ satisfies the following relations
			\begin{align*}
				\overset{1}{R} (X,Y)AZ & = A\overset{1}{R} (X,Y)Z, \\
				\overset{1}{R} (X,Y,Z,AW) & = - \overset{1}{R} (X,Y,AZ,W).
			\end{align*}
		\end{itemize}
	\end{theorem}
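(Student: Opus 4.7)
The plan is to start from the defining formula (\ref{eq:R1XYZAHM}) and its $(0,4)$ version
\[
\overset{1}{R}(X,Y,Z,W)=\overset{g}{R}(X,Y,Z,W)-\overset{1}{D}(X,Y)\,F(Z,W),
\]
obtained by pairing with $W$ and using $g(AZ,W)=F(Z,W)$. Then each identity reduces to comparing the corresponding K\"ahler identity for $\overset{g}{R}$ with a symmetric statement about $\overset{1}{D}$ and $F$.

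For part (2), I would tackle $\overset{1}{R}(X,Y)AZ=A\overset{1}{R}(X,Y)Z$ first. From (\ref{eq:R1XYZAHM}),
\[
\overset{1}{R}(X,Y)AZ=\overset{g}{R}(X,Y)AZ-\overset{1}{D}(X,Y)A(AZ)=A\overset{g}{R}(X,Y)Z-\overset{1}{D}(X,Y)A^{2}Z,
\]
where (\ref{eq:relation1forRg}) was used, and this equals $A\overset{1}{R}(X,Y)Z$ after applying $A$ directly to (\ref{eq:R1XYZAHM}). For the skew-symmetry $\overset{1}{R}(X,Y,Z,AW)=-\overset{1}{R}(X,Y,AZ,W)$, I would use that $F$ is skew-symmetric and $g$ symmetric to get $g(U,AW)=-g(AU,W)$ for any vector $U$, and then combine with the identity just proved:
\[
\overset{1}{R}(X,Y,Z,AW)=g(\overset{1}{R}(X,Y)Z,AW)=-g(A\overset{1}{R}(X,Y)Z,W)=-g(\overset{1}{R}(X,Y)AZ,W).
\]
No hybrid hypothesis is needed here, which matches the statement.

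For part (1), the hybrid assumption on $\overset{g}{\nabla}\pi$ enters through (\ref{eq:Dhybrid}), which gives $\overset{1}{D}(AX,AY)=\overset{1}{D}(X,Y)$ and $\overset{1}{D}(X,AY)=-\overset{1}{D}(AX,Y)$. Combined with the consequences of (\ref{eq:FAXY}),
\[
F(AZ,AW)=F(Z,W),\qquad F(AZ,W)=-F(Z,AW)=-g(Z,W),
\]
the three K\"ahler relations (\ref{eq:relation2forRg})--(\ref{eq:relation4forRg}) transfer directly to $\overset{1}{R}$. For instance, subtracting the $(0,4)$ forms of the two sides of $\overset{1}{R}(X,Y,AZ,AW)=\overset{1}{R}(AX,AY,Z,W)$ leaves
\[
\bigl[\overset{g}{R}(X,Y,AZ,AW)-\overset{g}{R}(AX,AY,Z,W)\bigr]-\bigl[\overset{1}{D}(X,Y)-\overset{1}{D}(AX,AY)\bigr]F(Z,W),
\]
which vanishes by (\ref{eq:relation2forRg}) and the hybrid relation; the other two relations are handled analogously using (\ref{eq:relation3forRg}) with $F(Z,AW)=g(Z,W)$, and (\ref{eq:relation4forRg}) with $F(AZ,AW)=F(Z,W)$.

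The only real subtlety is careful bookkeeping of signs, in particular distinguishing $F(AZ,AW)=F(Z,W)$ from $F(AZ,W)=-g(Z,W)$, so that the $\overset{1}{D}$-coefficients match up correctly with the hybrid identities. Once those signs are aligned, every identity reduces to one line.
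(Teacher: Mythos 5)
Your proposal is correct and follows essentially the same route as the paper: both reduce everything to the $(0,4)$ decomposition $\overset{1}{R}(X,Y,Z,W)=\overset{g}{R}(X,Y,Z,W)-\overset{1}{D}(X,Y)F(Z,W)$, then invoke the K\"ahler identities (\ref{eq:relation1forRg})--(\ref{eq:relation5forRg}) together with the hybridity of $\overset{1}{D}$ from (\ref{eq:Dhybrid}) for part (1), with part (2) needing no hybrid hypothesis. Your sign bookkeeping ($F(AZ,AW)=F(Z,W)$ versus $F(AZ,W)=-g(Z,W)$) and the derivation of the skew-symmetry in part (2) from $\overset{1}{R}(X,Y)AZ=A\overset{1}{R}(X,Y)Z$ are all consistent with what the paper leaves as ``proved analogously.''
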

	\begin{proof}
		From equation (\ref{eq:R1XYZgm}), we obtain the (0,4) type curvature tensor of the first kind
		\begin{equation*}
			\overset{1}{R} (X,Y,Z,W) = \overset{g}{R} (X,Y,Z,W)  -  \overset{1}{D}(X,Y) F(Z,W).   
		\end{equation*}
		From here, we have
		\begin{equation}\label{eq:R1XYAZAWAHM}
			\begin{split}
				\overset{1}{R} (X,Y,AZ,AW) & = \overset{g}{R} (X,Y,AZ,AW)  -  \overset{1}{D}(X,Y) F(AZ,AW) 
				\\ &  =   \overset{g}{R} (X,Y,AZ,AW)  -  \overset{1}{D}(X,Y) F(Z,W), 
			\end{split}
		\end{equation}
		where we used equation (\ref{eq:FAXY}). On the other hand, we have
		\begin{equation}\label{eq:R1AXAYZWAHM}
			\overset{1}{R} (AX,AY,Z,W) = \overset{g}{R} (AX,AY,Z,W)  -  \overset{1}{D}(AX,AY) F(Z,W).   
		\end{equation}
		After subtracting equation (\ref{eq:R1AXAYZWAHM}) from (\ref{eq:R1XYAZAWAHM}) and using (\ref{eq:relation2forRg}), we get
		\begin{equation*}
			\overset{1}{R} (X,Y,AZ,AW) - \overset{1}{R} (AX,AY,Z,W) = (\overset{1}{D}(AX,AY) - \overset{1}{D}(X,Y))F(Z,W).
		\end{equation*}
		From equation (\ref{eq:Dhybrid}), we see that
		\begin{equation*}
			\overset{1}{R} (X,Y,AZ,AW) = \overset{1}{R} (AX,AY,Z,W)
		\end{equation*}
		if $\overset{g}{\nabla} \pi$ is hybrid. Other relations are proved analogously.
	\end{proof}
	
	
	\subsection{Curvature tensor of the second kind}
	
	Using the curvature tensor of the second kind, we can get a new tensor on the K\"ahler manifold that is independent of quarter-symmetric connection generator $\pi$.
	\begin{theorem}
		Let $(\mathcal{M}, g, A)$ be a K\"ahler manifold with a quarter-symmetric metric $A$-connection (\ref{eq:Q-S-Gmetric}). Tensor
		\begin{equation}\label{eq:K2XYZgmAHM}
			\overset{2}{H} (X,Y)Z =  \overset{2}{R} (X,Y)Z  + \overset{2}{R}ic(AX,Z) A Y - \overset{2}{R}ic(AY,Z) AX
		\end{equation}
		is independent of generator $\pi$.
	\end{theorem}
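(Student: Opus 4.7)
The strategy mirrors the derivation of (\ref{eq:K1XYZgmAHM}) for the first-kind tensor: I will use equation (\ref{eq:R2XYZgm1}) to isolate the generator dependence into the tensor $\overset{2}{D}$, contract in $X$ to express $\overset{2}{D}$ in terms of the Ricci tensors $\overset{g}{R}ic$ and $\overset{2}{R}ic$, and then substitute the resulting expression back into (\ref{eq:R2XYZgm1}) so that all $\pi$-dependent terms are absorbed into $\overset{2}{R}$ and $\overset{2}{R}ic$.

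First I take the trace of the map $X\mapsto\overset{2}{R}(X,Y)Z$ in (\ref{eq:R2XYZgm1}). The trace of $X\mapsto\overset{2}{D}(X,Z)AY$ equals $\overset{2}{D}(AY,Z)$, while the trace of $X\mapsto\overset{2}{D}(Y,Z)AX$ equals $\overset{2}{D}(Y,Z)\,\mathrm{tr}(A)$. Here I use the crucial fact that on a K\"ahler manifold $\mathrm{tr}(A)=0$: since $A^2=-I$, the eigenvalues of $A$ are $\pm i$ with equal multiplicity $k$, so the trace over the real $2k$-dimensional tangent space vanishes. This yields
\[
\overset{2}{R}ic(Y,Z)=\overset{g}{R}ic(Y,Z)-\overset{2}{D}(AY,Z).
\]

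Next I replace $Y$ by $AY$ and use $A^2=-I$ to obtain
\[
\overset{2}{D}(Y,Z)=\overset{2}{R}ic(AY,Z)-\overset{g}{R}ic(AY,Z).
\]
Substituting this back into (\ref{eq:R2XYZgm1}) and grouping the quarter-symmetric quantities on one side gives the identity
\[
\overset{2}{R}(X,Y)Z+\overset{2}{R}ic(AX,Z)AY-\overset{2}{R}ic(AY,Z)AX=\overset{g}{R}(X,Y)Z+\overset{g}{R}ic(AX,Z)AY-\overset{g}{R}ic(AY,Z)AX.
\]
The left-hand side is exactly $\overset{2}{H}(X,Y)Z$ as defined in (\ref{eq:K2XYZgmAHM}), and the right-hand side depends only on the underlying K\"ahler structure $(g,A)$, so it is independent of the generator $\pi$.

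The only real subtlety is the vanishing of $\mathrm{tr}(A)$; without it the contracted identity would retain an unsolvable $\overset{2}{D}(Y,Z)$ term, and one could not cleanly invert for $\overset{2}{D}$ before back-substituting. Every other step is a routine trace computation and relabelling using $A^2=-I$.
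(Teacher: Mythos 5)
Your proposal is correct and follows essentially the same route as the paper: contract (\ref{eq:R2XYZgm1}) in $X$ using $\mathrm{tr}(A)=0$, invert with $A^2=-I$ to get $\overset{2}{D}(Y,Z)=\overset{2}{R}ic(AY,Z)-\overset{g}{R}ic(AY,Z)$, and substitute back to obtain exactly the identity (\ref{eq:R2XYZgmAHM3}). Your added justification that $\mathrm{tr}(A)=0$ (eigenvalues $\pm i$ in equal multiplicity) is a detail the paper merely asserts.
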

	\begin{proof}
		The curvature tensor of the second kind with respect to quarter-symmetric connection (\ref{eq:Q-S-Gmetric}) reads
		\begin{equation}\label{eq:R2XYZgm2}
			\overset{2}{R} (X,Y)Z = \overset{g}{R} (X,Y)Z  -  \overset{2}{D} (X,Z)A Y +  \overset{2}{D} (Y,Z)AX,
		\end{equation}
		where $\overset{2}{D}$ is $(0,2)$ type tensor given by (\ref{eq:D2}). By contracting vector field $X$ in equation (\ref{eq:R2XYZgm2}), we have
		\begin{equation}\label{eq:Ric2gmAHM}
			\overset{2}{R}ic (Y,Z) = \overset{g}{R}ic(Y,Z) -  \overset{2}{D} (AY,Z),
		\end{equation}
		where we used that the structure tensor $A$ is trace-free, i.e. $Trace\{X\rightarrow AX\}=0$. From equation (\ref{eq:Ric2gmAHM}), we have
		\begin{equation*}
			\overset{2}{D} (A^2Y,Z) =  \overset{g}{R}ic(AY,Z) - \overset{2}{R}ic (AY,Z)
		\end{equation*}
		and further 
		\begin{equation}\label{eq:D22}
			\overset{2}{D} (Y,Z) =  \overset{2}{R}ic(AY,Z) - \overset{g}{R}ic (AY,Z).
		\end{equation}
		By combining equations (\ref{eq:R2XYZgm2}) and (\ref{eq:D22}), we find
		\begin{equation*}
			\overset{2}{R} (X,Y)Z = \overset{g}{R} (X,Y)Z  -  (  \overset{2}{R}ic(AX,Z) - \overset{g}{R}ic (AX,Z)   )  A Y +   ( \overset{2}{R}ic(AY,Z) - \overset{g}{R}ic (AY,Z))AX,
		\end{equation*}
		from which
		\begin{equation}\label{eq:R2XYZgmAHM3}
			\overset{2}{R} (X,Y)Z  + \overset{2}{R}ic(AX,Z) A Y - \overset{2}{R}ic(AY,Z) AX = \overset{g}{R} (X,Y)Z  + \overset{g}{R}ic (AX,Z) A Y  - \overset{g}{R}ic (AY,Z) AX.
		\end{equation}
	\end{proof}
	
	Depending on generator $\pi$ property, the curvature tensor of the second kind and structure tensor $A$ have the following properties.
	\begin{theorem}
		Let $(\mathcal{M}, g, A)$ be a K\"ahler manifold with a quarter-symmetric metric $A$-connection (\ref{eq:Q-S-Gmetric}).
		\begin{itemize}
			\item[(1)] If $\overset{g}{\nabla} \pi$ and $\pi\otimes\pi$ are hybrid, then the curvature tensor of the second kind and structure tensor $A$ satisfies the following relations
			\begin{align*}
				\overset{2}{R} (X,Y,AZ,AW) & = \overset{2}{R} (AX,AY,Z,W), \\
				\overset{2}{R} (X,AY,AZ,W) & = \overset{2}{R} (AX,Y,Z,AW), \\
				\overset{2}{R} (AX,AY,AZ,AW) & = \overset{2}{R} (X,Y,Z,W). 
			\end{align*}
			
			\item[(2)] The curvature tensor of the second kind and  the structure tensor $A$ satisfies the following relations
			\begin{align*}
				\overset{2}{R} (X,Y)AZ & = A\overset{2}{R} (X,Y)Z, \\
				\overset{2}{R} (X,Y,Z,AW) & = - \overset{2}{R} (X,Y,AZ,W),
			\end{align*}
			if and only if
			\begin{equation*}
				\overset{2}{D} (X,Z)Y + 	\overset{2}{D} (X,AZ)AY= \overset{2}{D} (Y,Z)X + 	\overset{2}{D} (Y,AZ)AX,
			\end{equation*}
			where $\overset{2}{D}$ given with (\ref{eq:D2}).
		\end{itemize}
	\end{theorem}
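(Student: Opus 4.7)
The plan is to handle the two parts separately, both starting from the fundamental relation (\ref{eq:R2XYZgm1}) expressing $\overset{2}{R}$ in terms of $\overset{g}{R}$ and the $(0,2)$ tensor $\overset{2}{D}$.

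For part (1), I would first pair (\ref{eq:R2XYZgm1}) with $W$ to obtain the $(0,4)$ form
\begin{equation*}
\overset{2}{R}(X,Y,Z,W) = \overset{g}{R}(X,Y,Z,W) - \overset{2}{D}(X,Z)\,F(Y,W) + \overset{2}{D}(Y,Z)\,F(X,W),
\end{equation*}
since $g(AY,W)=F(Y,W)$. I would then substitute $AZ,AW$ for $Z,W$ on the one hand and $AX,AY$ for $X,Y$ on the other, using the values $F(AX,Y) = -g(X,Y)$, $F(X,AY)=g(X,Y)$, $F(AX,AY)=F(X,Y)$ from (\ref{eq:FAXY}) together with the Kähler symmetry (\ref{eq:relation2forRg}) for $\overset{g}{R}$. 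The $\overset{g}{R}$ contributions cancel, and the difference $\overset{2}{R}(X,Y,AZ,AW)-\overset{2}{R}(AX,AY,Z,W)$ collapses to terms of the form $[\overset{2}{D}(X,AZ)+\overset{2}{D}(AX,Z)]\,g(Y,W)$ and its $X \leftrightarrow Y$ twin. Under the hypothesis that $\overset{g}{\nabla}\pi$ and $\pi\otimes\pi$ are hybrid, (\ref{eq:thetaDhybrid}) for $\theta=2$ forces both combinations to vanish, yielding the first identity. The other two identities are obtained by the same procedure, this time invoking (\ref{eq:relation3forRg}) and (\ref{eq:relation4forRg}) in place of (\ref{eq:relation2forRg}) and using both the hybrid identity and $\overset{2}{D}(AX,AY) = \overset{2}{D}(X,Y)$.

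For part (2), I would substitute $AZ$ for $Z$ in (\ref{eq:R2XYZgm1}), apply Kähler identity (\ref{eq:relation1forRg}) to rewrite $\overset{g}{R}(X,Y)AZ = A\overset{g}{R}(X,Y)Z$, and compute $A\overset{2}{R}(X,Y)Z$ directly from (\ref{eq:R2XYZgm1}) using $A^2=-I$ to turn $A^2 Y$ and $A^2 X$ into $-Y$ and $-X$. Subtracting, the condition $\overset{2}{R}(X,Y)AZ = A\overset{2}{R}(X,Y)Z$ becomes exactly the stated algebraic identity on $\overset{2}{D}$, which gives the iff. The second relation $\overset{2}{R}(X,Y,Z,AW) = -\overset{2}{R}(X,Y,AZ,W)$ is equivalent to the first by non-degeneracy of $g$ together with the skew-adjointness $g(AU,V) = -g(U,AV)$ (which is immediate from $A^2=-I$ and $g(AU,AV)=g(U,V)$), so no independent calculation is needed for it.

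The main obstacle is bookkeeping rather than conceptual: one must carefully track which instance of (\ref{eq:FAXY}) applies at each occurrence in part (1) and keep the signs correct when cancelling the six terms on each side. The Kähler hypothesis is essential, since without the full package (\ref{eq:relation1forRg})--(\ref{eq:relation4forRg}) the $\overset{g}{R}$ parts would not cancel cleanly, and the hybrid hypothesis on $\overset{g}{\nabla}\pi$ and $\pi\otimes\pi$ is the precise input that turns the residual $\overset{2}{D}$ combinations into zero.
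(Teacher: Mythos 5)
Your proposal is correct and follows essentially the same route as the paper: pass to the $(0,4)$ form of $\overset{2}{R}$, substitute $A$ into the various slots, cancel the $\overset{g}{R}$ parts via the K\"ahler identities (\ref{eq:relation2forRg})--(\ref{eq:relation4forRg}), and kill the residual $\overset{2}{D}$ combinations with the hybrid property (\ref{eq:thetaDhybrid}); your part (2) computation likewise reproduces the stated iff condition exactly. If anything, you are slightly more thorough, since the paper's written proof only verifies the identity $\overset{2}{R}(X,AY,AZ,W)=\overset{2}{R}(AX,Y,Z,AW)$ explicitly and leaves part (2) and the remaining identities to the reader.
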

	\begin{proof}
		The (0,4) type curvature tensor of the second kind is given by equation
		\begin{equation*}
			\overset{2}{R} (X,Y,Z,W) = \overset{g}{R} (X,Y,Z,W)  -  \overset{2}{D} (X,Z)F(Y,W) +  \overset{2}{D} (Y,Z)F(X,W),
		\end{equation*}
		from which it follows
		\begin{align*}
			\overset{2}{R} (X,AY,AZ,W) & = \overset{g}{R} (X,AY,AZ,W)  +  \overset{2}{D} (X,AZ)g(Y,W) +  \overset{2}{D} (AY,AZ)F(X,W), \\
			\overset{2}{R} (AX,Y,Z,AW) & = \overset{g}{R} (AX,Y,Z,AW)  -  \overset{2}{D} (AX,Z)g(Y,W) +  \overset{2}{D} (Y,Z)F(X,W),
		\end{align*}
		where we used relations (\ref{eq:FAXY}). By subtracting the previous two equations and using (\ref{eq:relation3forRg}), we get
		\begin{equation*}
			\begin{split}
				\overset{2}{R} (X,AY,AZ,W) - \overset{2}{R} (AX,Y,Z,AW)  = & (\overset{2}{D} (X,AZ) + \overset{2}{D} (AX,Z) )g(Y,W) 
				\\ & + (\overset{2}{D} (AY,AZ)- \overset{2}{D} (Y,Z))F(X,W).
			\end{split}
		\end{equation*}
		If $\overset{g}{\nabla} \pi$ and $\pi\otimes\pi$ are hybrid, then the relation (\ref{eq:thetaDhybrid}) holds, and we verified that
		\begin{equation*}
			\overset{2}{R} (X,AY,AZ,W) = \overset{2}{R} (AX,Y,Z,AW).
		\end{equation*}
	\end{proof}

	\subsection{Curvature tensor of the third kind}
	
	The curvature tensor of the third kind $\overset{3}{R}$ with respect to quarter-symmetric connection (\ref{eq:Q-S-Gmetric}) is given by equation
	\begin{equation}\label{eq:R3XYZgm2}
		\overset{3}{R} (X,Y)Z = \overset{g}{R} (X,Y)Z  -  \overset{2}{D} (X,Y)A Z +  \overset{3}{D} (Y,Z)AX,
	\end{equation}
	where $\overset{2}{D}$ and $\overset{3}{D}$ are $(0,2)$ type tensors given by (\ref{eq:D2}) and (\ref{eq:D3}), respectively.
	If we contract equation (\ref{eq:R3XYZgm2}) with respect to $X$, then we obtain the relation between Ricci tensors $\overset{3}{R}ic$ and $\overset{g}{R}ic$
	\begin{equation*}
		\overset{3}{R}ic (Y,Z) = \overset{g}{R}ic(Y,Z)  -  \overset{2}{D} (AZ,Y),
	\end{equation*}
	from which we get the following relation
	\begin{equation}\label{eq:D23}
		\overset{2}{D} (Z,Y) = \overset{3}{R}ic (Y,AZ) - \overset{g}{R}ic(Y,AZ).
	\end{equation}
	On the other hand, if we contract equation (\ref{eq:R3XYZgm2}) with respect to vector field $Z$, then we get
	\begin{equation}\label{eq:'R3AHM}
		\overset{3}{'R} (X,Y) =     \overset{3}{D} (Y,AX),
	\end{equation}
	where we used $Trace\{Z\rightarrow \overset{g}{R} (X,Y)Z\}=0$ and denoted $\overset{3}{'R}(X,Y) = Trace\{Z\rightarrow \overset{3}{R} (X,Y)Z\}$. Further, it follows that
	\begin{equation}\label{eq:D31}
		\overset{3}{D} (Y,X) = - \overset{3}{'R} (AX,Y),
	\end{equation}
	where we take into account that $A^2=-I$. By replacing equations (\ref{eq:D23}) and (\ref{eq:D31}) into (\ref{eq:R3XYZgm2}), we have
	\begin{equation*}
		\overset{3}{R} (X,Y)Z = \overset{g}{R} (X,Y)Z  -  (\overset{3}{R}ic (Y,AX) - \overset{g}{R}ic(Y,AX)) AZ   - \overset{3}{'R} (AZ,Y) AX,
	\end{equation*}
	and further
	\begin{equation}\label{eq:R3XYZgm4}
		\overset{3}{R} (X,Y)Z + \overset{3}{R}ic (Y,AX) AZ + \overset{3}{'R} (AZ,Y) AX = \overset{g}{R} (X,Y)Z  + \overset{g}{R}ic(Y,AX) AZ.
	\end{equation}
	Finally, we have proved the following theorem.
	\begin{theorem}
		Let $(\mathcal{M}, g, A)$ be a K\"ahler manifold with a quarter-symmetric metric $A$-connection (\ref{eq:Q-S-Gmetric}). Tensor
		\begin{equation}\label{eq:K3XYZgmAHM}
			\overset{3}{H} (X,Y)Z =  \overset{3}{R} (X,Y)Z + \overset{3}{R}ic (Y,AX) AZ + \overset{3}{'R} (AZ,Y) AX
		\end{equation}
		is independent of generator $\pi$.
	\end{theorem}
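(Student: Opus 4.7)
The plan is to mirror the strategy that worked for $\overset{1}{H}$ and $\overset{2}{H}$: start from the relation (\ref{eq:R3XYZgm2}) expressing $\overset{3}{R}$ in terms of $\overset{g}{R}$ and the two generator-dependent tensors $\overset{2}{D}$ and $\overset{3}{D}$, take appropriate contractions to solve for these tensors in terms of curvature quantities of $\overset{3}{\nabla}$ and $\overset{g}{\nabla}$, and then substitute back so that the generator-dependent pieces reorganize into the expression for $\overset{3}{H}$ on one side and a generator-free expression on the other.

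The execution goes as follows. First I would contract (\ref{eq:R3XYZgm2}) over $X$. Since $A$ is trace-free (as $A^2=-I$ on a K\"ahler manifold of even dimension), the $\overset{3}{D}$-term contributes nothing, and the $\overset{2}{D}$-term contributes $-\overset{2}{D}(AZ,Y)$. Replacing $Z$ by $AZ$ and using $A^2=-I$ yields exactly (\ref{eq:D23}), which isolates $\overset{2}{D}$ in terms of $\overset{3}{R}ic$ and $\overset{g}{R}ic$. Next I would contract (\ref{eq:R3XYZgm2}) over $Z$; the $\overset{g}{R}$-contraction vanishes by the antisymmetry of $\overset{g}{R}(X,Y)\cdot$, and the $\overset{2}{D}$-term again vanishes because $A$ is trace-free, leaving relation (\ref{eq:'R3AHM}). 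Replacing $X$ by $AX$ then gives (\ref{eq:D31}), which isolates $\overset{3}{D}$ in terms of $\overset{3}{'R}$.

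Substituting (\ref{eq:D23}) and (\ref{eq:D31}) back into (\ref{eq:R3XYZgm2}) eliminates both $\overset{2}{D}$ and $\overset{3}{D}$, and after transposing the $\overset{3}{R}ic$- and $\overset{3}{'R}$-terms to the left side one arrives precisely at (\ref{eq:R3XYZgm4}):
\begin{equation*}
\overset{3}{H}(X,Y)Z = \overset{3}{R}(X,Y)Z + \overset{3}{R}ic(Y,AX)AZ + \overset{3}{'R}(AZ,Y)AX = \overset{g}{R}(X,Y)Z + \overset{g}{R}ic(Y,AX)AZ.
\end{equation*}
The right-hand side is built only from $\overset{g}{R}$, $\overset{g}{R}ic$ and the K\"ahler structure $A$, none of which involve the generator $\pi$, so $\overset{3}{H}$ is independent of $\pi$.

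The only delicate point, and the main thing to verify carefully, is that both traces behave correctly: the first contraction must kill the $\overset{3}{D}$-contribution and the second must kill the $\overset{2}{D}$-contribution, and this relies crucially on $\operatorname{Trace}(A)=0$, which follows from $A^2=-I$. Beyond that, the argument is purely a matter of bookkeeping—two algebraic inversions using $A^2=-I$ to swap arguments in $\overset{2}{D}$ and $\overset{3}{D}$—and no hybrid assumption on $\overset{g}{\nabla}\pi$ or $\pi\otimes\pi$ is needed, which parallels the situation for $\overset{1}{H}$ and $\overset{2}{H}$.
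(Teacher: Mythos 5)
Your proposal is correct and follows essentially the same route as the paper: contracting (\ref{eq:R3XYZgm2}) over $X$ and over $Z$ to obtain (\ref{eq:D23}) and (\ref{eq:D31}), then substituting back to reach (\ref{eq:R3XYZgm4}), whose right-hand side involves only $\overset{g}{R}$, $\overset{g}{R}ic$ and $A$. The supporting observations (trace-freeness of $A$ from $A^2=-I$, skew-symmetry of the endomorphism $\overset{g}{R}(X,Y)$, and the fact that no hybridity hypothesis is needed) are all accurate and match the paper's derivation.
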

	
	By comparing equations (\ref{eq:R1XYZgmAHM2}) and (\ref{eq:R3XYZgm4}), we conclude that
	\begin{equation*}
		\overset{1}{H} (X,Y)Z = \overset{3}{H} (X,Y)Z.
	\end{equation*}
	Based on expressions for tensor $\overset{2}{D}$, i.e. from equations (\ref{eq:D22}) and (\ref{eq:D23}), it follows that
	\begin{equation*}
		\overset{2}{R}ic (X, Y) = \overset{3}{R}ic (Y,X).
	\end{equation*}
	
	In the following statement, we state the properties of the curvature tensor of the third kind, which can be proved similarly to the properties of the previous tensors.
	
	\begin{theorem}
		Let $(\mathcal{M}, g, A)$ be a K\"ahler manifold with a quarter-symmetric metric $A$-connection (\ref{eq:Q-S-Gmetric}).
		\begin{itemize}
			\item[(1)] If $\overset{g}{\nabla} \pi$ and $\pi\otimes\pi$ are hybrid, then the curvature tensor of the third kind and structure tensor $A$ satisfies the following relations
			\begin{align*}
				\overset{3}{R} (X,Y,AZ,AW) & = \overset{3}{R} (AX,AY,Z,W), \\
				\overset{3}{R} (X,AY,AZ,W) & = \overset{3}{R} (AX,Y,Z,AW), \\
				\overset{3}{R} (AX,AY,AZ,AW) & = \overset{3}{R} (X,Y,Z,W). 
			\end{align*}
			
			\item[(2)] The curvature tensor of the third kind and structure tensor $A$ satisfies the following relations
			\begin{align*}
				\overset{3}{R} (X,Y)AZ & = A\overset{3}{R} (X,Y)Z, \\
				\overset{3}{R} (X,Y,Z,AW) & = - \overset{3}{R} (X,Y,AZ,W),
			\end{align*}
			if and only if 
			\begin{equation*}
				\overset{3}{D} (Y,Z)X = - \overset{3}{D} (Y,AZ)AX,
			\end{equation*}
			where $\overset{3}{D}$ given by (\ref{eq:D3}).
		\end{itemize}
	\end{theorem}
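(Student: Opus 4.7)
The plan is to follow the same pattern used for the second-kind tensor, starting from the identity $\overset{3}{R}(X,Y)Z = \overset{g}{R}(X,Y)Z - \overset{2}{D}(X,Y)AZ + \overset{3}{D}(Y,Z)AX$ from (\ref{eq:R3XYZgm1}) and its $(0,4)$ version
\begin{equation*}
\overset{3}{R}(X,Y,Z,W) = \overset{g}{R}(X,Y,Z,W) - \overset{2}{D}(X,Y) F(Z,W) + \overset{3}{D}(Y,Z) F(X,W).
\end{equation*}
The three ingredients that will do all the work are: the K\"ahler identities (\ref{eq:relation1forRg})--(\ref{eq:relation4forRg}) for $\overset{g}{R}$, the $A$-identities (\ref{eq:FAXY}) for $F$, and the hybrid identities (\ref{eq:thetaDhybrid}) for $\overset{2}{D}$ and $\overset{3}{D}$.

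For part (1), I would substitute $(AZ,AW)$, $(AX,AY)$, $(X,AY,AZ,W)$, etc.\ into the displayed $(0,4)$ expression. In each case the curvature summand transforms correctly by one of (\ref{eq:relation2forRg})--(\ref{eq:relation4forRg}), so only the $\overset{2}{D}\,F$ and $\overset{3}{D}\,F$ corrections have to be matched. For example, subtracting
\begin{equation*}
\overset{3}{R}(X,Y,AZ,AW) - \overset{3}{R}(AX,AY,Z,W)
\end{equation*}
produces, after (\ref{eq:relation2forRg}) and (\ref{eq:FAXY}), a combination of $(\overset{2}{D}(AX,AY)-\overset{2}{D}(X,Y))F(Z,W)$ and $(\overset{3}{D}(AY,AZ)-\overset{3}{D}(Y,Z))F(X,W)$, both of which vanish by (\ref{eq:thetaDhybrid}). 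The two remaining identities are handled analogously, using (\ref{eq:relation3forRg}) and (\ref{eq:relation4forRg}) in place of (\ref{eq:relation2forRg}).

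For part (2), I would compute directly from (\ref{eq:R3XYZgm2}). Using $A^2=-I$ and the K\"ahler identity (\ref{eq:relation1forRg}),
\begin{equation*}
\overset{3}{R}(X,Y)AZ = A\overset{g}{R}(X,Y)Z + \overset{2}{D}(X,Y)Z + \overset{3}{D}(Y,AZ)AX,
\end{equation*}
\begin{equation*}
A\overset{3}{R}(X,Y)Z = A\overset{g}{R}(X,Y)Z + \overset{2}{D}(X,Y)Z - \overset{3}{D}(Y,Z)X.
\end{equation*}
The $\overset{g}{R}$ and $\overset{2}{D}$ terms cancel, so $\overset{3}{R}(X,Y)AZ = A\overset{3}{R}(X,Y)Z$ is equivalent to $\overset{3}{D}(Y,Z)X = -\overset{3}{D}(Y,AZ)AX$, which is the stated condition. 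For the companion identity $\overset{3}{R}(X,Y,Z,AW) = -\overset{3}{R}(X,Y,AZ,W)$, I would use that $F$ skew-symmetric together with $F(X,Y)=g(AX,Y)$ forces $g(AU,V) = -g(U,AV)$; then
\begin{equation*}
\overset{3}{R}(X,Y,Z,AW) = g(\overset{3}{R}(X,Y)Z,AW) = -g(A\overset{3}{R}(X,Y)Z,W),
\end{equation*}
and the first identity of part (2) converts this to $-g(\overset{3}{R}(X,Y)AZ,W) = -\overset{3}{R}(X,Y,AZ,W)$, showing the two conditions in part (2) are equivalent.

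The main obstacle is purely bookkeeping: keeping track of where $A^2=-I$ is used (which produces sign flips in the $F$ and $\overset{3}{D}$ slots) versus where a K\"ahler identity is used to move $A$ past $\overset{g}{R}$, and verifying in part (2) that no residual term survives after cancellation beyond the one producing the stated necessary-and-sufficient condition. Once the signs are handled, each claim reduces to applying one hybrid identity from (\ref{eq:thetaDhybrid}) or to a two-line algebraic comparison.
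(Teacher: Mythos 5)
Your proposal is correct and follows exactly the route the paper intends: the paper gives no separate proof for the third-kind tensor, explicitly deferring to the analogous computations done in detail for the first- and second-kind tensors, which is what you reproduce. (One tiny bookkeeping slip: in part (1) the $\overset{3}{D}$-correction actually comes out as $(\overset{3}{D}(Y,AZ)+\overset{3}{D}(AY,Z))g(X,W)$ because $F(X,AW)=g(X,W)$, but it still vanishes by the same hybrid identity, so the argument stands.)
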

	
	\subsection{Curvature tensor of the fourth kind}
	
	The equation of the curvature tensor of the fourth kind $\overset{4}{R} $ on the K\"ahler manifold with a quarter-symmetric connection (\ref{eq:Q-S-Gmetric}) take the form
	\begin{equation}\label{eq:R4XYZgmAHM}
		\overset{4}{R} (X,Y)Z =  \overset{g}{R} (X,Y)Z  -  \overset{3}{D} (X,Y)A Z +  \overset{3}{D} (Y,Z)AX + \pi (Z) (\pi (Y) X - \pi (X) Y ),
	\end{equation}
	where $\overset{3}{D}$ is given by equation (\ref{eq:D3}). If we contract with respect to vector  $X$  in equation (\ref{eq:R4XYZgmAHM}), then we obtain the relation between the Ricci tensor of the fourth kind and the Ricci tensor of metric $g$ 
	\begin{equation}\label{eq:Ric4}
		\overset{4}{R}ic (Y,Z) =  \overset{g}{R}ic (Y,Z)  -  \overset{3}{D} (AZ,Y)  + (n-1)\pi (Y)\pi (Z).
	\end{equation}
	On the other hand, by contracting equation (\ref{eq:R4XYZgmAHM}) with respect to $Z$, we have the following equation
	\begin{equation}\label{eq:'R4AHM}
		\overset{4}{'R} (X,Y) =  \overset{3}{D} (Y,AX),
	\end{equation}
	from which we obtain
	\begin{equation}\label{eq:D34}
		\overset{3}{D} (Y,X) = -\overset{4}{'R} (AX,Y),
	\end{equation}
	where $\overset{4}{'R}(X,Y) = Trace\{Z\rightarrow \overset{4}{R} (X,Y)Z\}$. From (\ref{eq:Ric4}) and (\ref{eq:D34}), we have
	\begin{equation}\label{eq:piYpiZ}
		\pi (Y)\pi (Z) = \frac{1}{n-1} ( \overset{4}{R}ic (Y,Z) -  \overset{g}{R}ic (Y,Z) -  \overset{4}{'R} (AY,AZ) ).
	\end{equation}
	By substituting equations (\ref{eq:D34}) and (\ref{eq:piYpiZ}) into (\ref{eq:R4XYZgmAHM}), after simple rearranging, we obtain
	\begin{equation*}
		\begin{split}
			\overset{4}{R} (X,Y)Z  & - \overset{4}{'R} (AY,X)AZ + \overset{4}{'R} (AZ,Y)AX 
			\\ & - \frac{1}{n-1} ( \overset{4}{R}ic (Y,Z)X - \overset{4}{R}ic (X,Z)Y -  \overset{4}{'R} (AY,AZ)X + \overset{4}{'R} (AX,AZ)Y  )
			\\ & =  \overset{g}{R} (X,Y)Z +  \frac{1}{n-1}  (  \overset{g}{R}ic (X,Z)Y - \overset{g}{R}ic (Y,Z)X ) = \overset{g}{W} (X,Y)Z,
		\end{split}
	\end{equation*}
	where $\overset{g}{W}$ is the Weyl projective curvature tensor (\ref{eq:Weylptensor}). The tensor of the left-hand side of the previous equation is independent of the choice of a 1-form $\pi$.
	
	\begin{theorem}\label{thm:H4KM}
		Let $(\mathcal{M}, g, A)$ be a K\"ahler manifold with a quarter-symmetric metric $A$-connection (\ref{eq:Q-S-Gmetric}). Tensor
		\begin{equation}\label{eq:K4XYZgmAHM}
			\begin{split}
				\overset{4}{H} (X,Y)Z = & \overset{4}{R} (X,Y)Z   - \overset{4}{'R} (AY,X)AZ + \overset{4}{'R} (AZ,Y)AX 
				\\ & - \frac{1}{n-1} ( \overset{4}{R}ic (Y,Z)X - \overset{4}{R}ic (X,Z)Y -  \overset{4}{'R} (AY,AZ)X + \overset{4}{'R} (AX,AZ)Y  ) 
			\end{split}
		\end{equation}
		is independent of generator $\pi$ and it is equal to the Weyl projective curvature tensor $\overset{g}{W}$.
	\end{theorem}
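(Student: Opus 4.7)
The plan is to eliminate every occurrence of the generator $\pi$ from equation (\ref{eq:R4XYZgmAHM}) by expressing it in terms of the curvature tensor $\overset{4}{R}$ together with its two independent traces, $\overset{4}{R}ic$ and $\overset{4}{'R}$. Once $\pi$ has been removed, the remaining piece involving $\overset{g}{R}$ and $\overset{g}{R}ic$ should reassemble into the Weyl projective tensor (\ref{eq:Weylptensor}), so that a single substitution argument proves both the $\pi$-independence and the identification with $\overset{g}{W}$ simultaneously.

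First, I would contract (\ref{eq:R4XYZgmAHM}) in two independent ways. Contracting over the first slot $X$ produces (\ref{eq:Ric4}), using that $A$ is trace-free on a K\"ahler manifold and that the trace of $Y \mapsto \overset{g}{R}(X,Y)Z$ in $X$ gives $\overset{g}{R}ic(Y,Z)$; this identity involves both $\overset{3}{D}$ and the rank-one term $\pi \otimes \pi$. Contracting over the third slot $Z$ produces (\ref{eq:'R4AHM}), which directly identifies $\overset{4}{'R}$ with a reshuffling of $\overset{3}{D}$ by $A$, and then (\ref{eq:D34}) follows from $A^2 = -I$.

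Next, I would substitute (\ref{eq:D34}) back into (\ref{eq:Ric4}) and solve for $\pi(Y)\pi(Z)$; this yields (\ref{eq:piYpiZ}), in which the generator appears only through intrinsic quantities of $\overset{1}{\nabla}$. With both $\overset{3}{D}$ and $\pi \otimes \pi$ now written in terms of $\overset{4}{R}ic$ and $\overset{4}{'R}$, substituting them into (\ref{eq:R4XYZgmAHM}) eliminates $\pi$. Grouping the $\overset{g}{R}ic$-terms that arise from (\ref{eq:piYpiZ}) with the $\overset{g}{R}$-term reconstructs the right-hand side of (\ref{eq:Weylptensor}), while the $\overset{4}{R}$, $\overset{4}{R}ic$, and $\overset{4}{'R}$ terms rearrange into precisely the combination (\ref{eq:K4XYZgmAHM}).

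The main obstacle will be the bookkeeping of the antisymmetrization in the pair $(X,Y)$: the contribution $\pi(Z)(\pi(Y)X - \pi(X)Y)$ from (\ref{eq:R4XYZgmAHM}), after inserting (\ref{eq:piYpiZ}), expands into four terms, and these must combine correctly with the $\overset{3}{D}$-terms (which carry their own skew-symmetry in $X \leftrightarrow Y$ after the substitution of (\ref{eq:D34})) so that the coefficient $\tfrac{1}{n-1}$ emerges uniformly. Once these coefficients are matched and the Ricci pieces of $g$ are isolated on the right-hand side, both claims of Theorem \ref{thm:H4KM} follow in one stroke.
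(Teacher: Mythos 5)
Your proposal follows the paper's own proof exactly: contract (\ref{eq:R4XYZgmAHM}) over $X$ to get (\ref{eq:Ric4}) and over $Z$ to get (\ref{eq:'R4AHM}), invert via $A^2=-I$ to obtain (\ref{eq:D34}), solve for $\pi\otimes\pi$ as in (\ref{eq:piYpiZ}), and substitute back so that the $\overset{g}{R}$ and $\overset{g}{R}ic$ terms reassemble into $\overset{g}{W}$. The approach and the bookkeeping you describe are precisely what the paper does, so the proposal is correct and essentially identical to the published argument.
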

	Immediately, we have the following corollary.
	\begin{corollary}
		Let $(\mathcal{M}, g, A)$ be a K\"ahler manifold with a quarter-symmetric metric $A$-connection (\ref{eq:Q-S-Gmetric}). If Ricci tensor $\overset{4}{R}ic$ and tensor $\overset{4}{'R}$ vanish on this manifold, then the curvature tensor of the fourth kind and the Weyl projective curvature tensor are equal, i.e. $\overset{4}{R} = \overset{g}{W}$.
	\end{corollary}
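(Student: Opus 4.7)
The plan is a direct substitution into the identity already established in Theorem \ref{thm:H4KM}. That theorem asserts that the tensor $\overset{4}{H}(X,Y)Z$ defined in (\ref{eq:K4XYZgmAHM}) coincides with the Weyl projective curvature tensor $\overset{g}{W}(X,Y)Z$ on any K\"ahler manifold equipped with the quarter-symmetric metric $A$-connection (\ref{eq:Q-S-Gmetric}). The defining expression (\ref{eq:K4XYZgmAHM}) is a linear combination of $\overset{4}{R}(X,Y)Z$ together with six correction terms, each of which is built as a contraction of either $\overset{4}{R}ic$ or the trace $\overset{4}{'R}$ with one of the vector fields $X,Y,Z$ or with $A$ applied to them.

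Under the hypothesis of the corollary, namely $\overset{4}{R}ic=0$ and $\overset{4}{'R}=0$, every one of those six correction terms vanishes identically. Consequently the identity $\overset{4}{H}=\overset{g}{W}$ collapses to $\overset{4}{R}=\overset{g}{W}$, which is exactly the conclusion. There is essentially no obstacle here since the substantive computation, namely the elimination of the generator $\pi$ from the expression for $\overset{4}{R}$ via equations (\ref{eq:Ric4}), (\ref{eq:'R4AHM}), (\ref{eq:D34}) and (\ref{eq:piYpiZ}), was already performed in the proof of Theorem \ref{thm:H4KM}; the corollary merely reads off the special case in which all auxiliary Ricci-type tensors vanish.
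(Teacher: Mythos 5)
Your proposal is correct and matches the paper's (implicit) argument exactly: the paper derives the corollary "immediately" from Theorem \ref{thm:H4KM} by noting that when $\overset{4}{R}ic$ and $\overset{4}{'R}$ vanish, all correction terms in (\ref{eq:K4XYZgmAHM}) drop out, leaving $\overset{4}{R}=\overset{4}{H}=\overset{g}{W}$. Nothing further is needed.
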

	
	From equations (\ref{eq:'R3AHM}) and (\ref{eq:'R4AHM}), we obtain the relation
	\begin{equation*}
		\overset{3}{'R} = \overset{4}{'R}.
	\end{equation*}
	
	Using relations (\ref{eq:relation1forRg})-(\ref{eq:relation5forRg}), we can easily prove some relations for the curvature tensor of the fourth kind.
	
	\begin{theorem}
		Let $(\mathcal{M}, g, A)$ be a K\"ahler manifold with a quarter-symmetric metric $A$-connection (\ref{eq:Q-S-Gmetric}).
		\begin{itemize}
			\item[(1)] If $\overset{g}{\nabla} \pi$ and $\pi\otimes\pi$ are hybrid, then the curvature tensor of the fourth kind and structure tensor $A$ satisfies the following relations
			\begin{align*}
				\overset{4}{R} (X,Y,AZ,AW) & = \overset{4}{R} (AX,AY,Z,W), \\
				\overset{4}{R} (X,AY,AZ,W) & = \overset{4}{R} (AX,Y,Z,AW), \\
				\overset{4}{R} (AX,AY,AZ,AW) & = \overset{4}{R} (X,Y,Z,W). 
			\end{align*}
			
			\item[(2)] The curvature tensor of the fourth kind and structure tensor $A$ satisfies the following relations
			\begin{align*}
				\overset{4}{R} (X,Y)AZ & = A\overset{4}{R} (X,Y)Z, \\
				\overset{4}{R} (X,Y,Z,AW) & = - \overset{4}{R} (X,Y,AZ,W),
			\end{align*}
			if and only if
			\begin{equation*}
				\overset{4}{D} (Y,Z)AX + \pi(X) \pi(Z)AY = - \overset{4}{D} (Y,AZ)X + \pi(X) \pi(AZ)Y,
			\end{equation*}
			where $\overset{4}{D}(Y,Z)=(\overset{g}{\nabla}_{Y} \pi)(AZ) - 2\pi(Y) \pi(Z)$. 
			
		\end{itemize}
	\end{theorem}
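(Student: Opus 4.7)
The plan is to mimic the template that worked for the curvature tensors of kinds one, two and three, but to keep careful track of the two new ``extra'' terms that appear only for $\overset{4}{R}$, namely the $\pi(Z)(\pi(Y)X-\pi(X)Y)$ contribution in (\ref{eq:R4XYZgmAHM}). I would first take the inner product of (\ref{eq:R4XYZgmAHM}) with $W$ to obtain the $(0,4)$-form
\begin{equation*}
\overset{4}{R}(X,Y,Z,W)=\overset{g}{R}(X,Y,Z,W)-\overset{3}{D}(X,Y)F(Z,W)+\overset{3}{D}(Y,Z)F(X,W)+\pi(Z)\bigl(\pi(Y)g(X,W)-\pi(X)g(Y,W)\bigr),
\end{equation*}
and then work identity by identity.

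For part (1), for each of the three $A$-symmetries I would compute the two sides of the claimed equality using the display above, cancel the Riemannian pieces using the Kähler relations (\ref{eq:relation2forRg})--(\ref{eq:relation4forRg}), and simplify the $F$-factors using $F(AZ,AW)=F(Z,W)$ and $F(AZ,W)=-g(Z,W)$ from (\ref{eq:FAXY}). Because both $g$ and $F$ are hybrid on a Kähler manifold and, under the assumption, $\overset{g}{\nabla}\pi$ and $\pi\otimes\pi$ are hybrid as well, (\ref{eq:thetaDhybrid}) for $\theta=3$ applies and the $\overset{3}{D}$-terms match. The remaining $\pi\otimes\pi$ contributions pair off using the hybrid identities of (\ref{eq:hybrid1})--(\ref{eq:hybrid2}) applied to $\pi\otimes\pi$ alone; this is the routine part.

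For part (2), since $g(AU,V)=-g(U,AV)$ and bilinearity of $g$ show that the two stated equations are equivalent, it suffices to treat the vector identity $\overset{4}{R}(X,Y)AZ=A\overset{4}{R}(X,Y)Z$. I would compute both sides directly from (\ref{eq:R4XYZgmAHM}). The Riemannian term drops out thanks to the Kähler identity (\ref{eq:relation1forRg}), and the terms of the form $\overset{3}{D}(X,Y)A(AZ)$ cancel using $A^2=-I$. What remains, after grouping, is
\begin{equation*}
\bigl[\overset{3}{D}(Y,AZ)-\pi(Y)\pi(Z)\bigr]AX+\bigl[\overset{3}{D}(Y,Z)+\pi(Y)\pi(AZ)\bigr]X+\pi(X)\pi(Z)AY-\pi(X)\pi(AZ)Y=0.
\end{equation*}
The final step is to rewrite the two bracketed expressions using the definition of $\overset{3}{D}$ in (\ref{eq:D3}), recognising $(\overset{g}{\nabla}_{Y}\pi)(AZ)-2\pi(Y)\pi(Z)=\overset{4}{D}(Y,Z)$ and, via the substitution $Z\mapsto AZ$ together with $A^{2}=-I$, the companion identity involving $\overset{4}{D}(Y,AZ)$.

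The main obstacle is not conceptual but notational: correctly identifying $\overset{4}{D}$ inside the residual expression requires two substitutions (one direct, one after $Z\mapsto AZ$), and the bookkeeping of the $\pi(AZ)$ versus $\pi(Z)$ terms must be done with care so that the $\overset{3}{D}$-pieces combine cleanly into $\overset{4}{D}$. Once this rewriting is executed, the stated necessary-and-sufficient condition follows immediately, and the $(0,4)$-type second identity is obtained from it by pairing with $W$ and invoking the hybridicity of $g$.
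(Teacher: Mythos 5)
Your proposal is correct and follows exactly the template the paper itself uses for the other curvature tensors (the paper omits the proof of this particular theorem, remarking only that it is analogous). One point worth flagging: your residual expression for part (2) is right, and carrying out the rewriting you describe gives the coefficient of $X$ as $\overset{3}{D}(Y,Z)+\pi(Y)\pi(AZ)=(\overset{g}{\nabla}_{Y}\pi)(Z)+2\pi(Y)\pi(AZ)=-\overset{4}{D}(Y,AZ)$, so the necessary-and-sufficient condition comes out as $\overset{4}{D}(Y,Z)AX+\pi(X)\pi(Z)AY=\overset{4}{D}(Y,AZ)X+\pi(X)\pi(AZ)Y$, which differs from the printed condition by the sign of the $\overset{4}{D}(Y,AZ)X$ term; since the analogous condition for $\overset{3}{R}$ checks out under the same computation, this appears to be a sign misprint in the theorem statement rather than a flaw in your argument, but you should not assert that the printed condition ``follows immediately'' without noting this.
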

	
	
	\subsection{Curvature tensor of the fifth kind}
	
	We will prove the following theorem using the curvature tensor of the fifth kind.
	
	\begin{theorem}
		Let $(\mathcal{M}, g, A)$ be a K\"ahler manifold with a quarter-symmetric metric $A$-connection (\ref{eq:Q-S-Gmetric}). Tensor
		\begin{equation}\label{eq:K5XYZgmAHM}
			\begin{split}
				\overset{5}{H} (X,Y)Z  =  \overset{5}{R} (X,Y)Z  & + \frac{1}{n-1} ( \overset{5}{R}ic (X,Y)Z - \overset{5}{R}ic (Y,Z)X )
				- \frac{1}{2(n-1)} ( \overset{1}{R}ic (X,Y)Z  - \overset{1}{R}ic (Y,Z)X )
				\\ & - \frac{1}{2(n-1)} (   \overset{3}{'R} (AY,AX)Z - \overset{3}{'R} (AZ,AY)X  ) 
				\\ & 
				+ \frac{1}{2} ( \overset{1}{R}ic (Y,AX)AZ - \overset{3}{R}ic (Z,AY)AX -  \overset{3}{'R} (AZ,X)AY  )
			\end{split}
		\end{equation}
		is independent of generator $\pi$.
	\end{theorem}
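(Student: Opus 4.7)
The plan is to reduce the formula for $\overset{5}{R}(X,Y)Z$ on the K\"ahler manifold to a $\pi$-independent core plus a single scalar-valued $\pi\!\otimes\!\pi$ contribution, and then eliminate that residual term using an $X$-contraction, exactly as was done for the fourth kind.

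First, I would simplify formula (\ref{eq:R5XYZgm1}) using $A^{2}=-I$ and the direct identity
\begin{equation*}
\overset{2}{D}(X,Y) - \overset{3}{D}(Y,X) = \overset{1}{D}(X,Y),
\end{equation*}
which follows immediately from the definitions (\ref{eq:D1})--(\ref{eq:D3}) since the $\pi(Y)\pi(AX)$ contributions cancel. Then I would substitute the expressions for the $D$-tensors in terms of Ricci-type quantities obtained in the earlier subsections, namely $\overset{1}{D}(X,Y)=\overset{1}{R}ic(Y,AX)-\overset{g}{R}ic(Y,AX)$ from (\ref{eq:D1Ric1}), $\overset{2}{D}(Y,Z)=\overset{3}{R}ic(Z,AY)-\overset{g}{R}ic(Z,AY)$ from (\ref{eq:D23}), and $\overset{3}{D}(X,Z)=-\overset{3}{'R}(AZ,X)$ from (\ref{eq:D31}). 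This brings the identity into the form
\begin{equation*}
\overset{5}{R}(X,Y)Z + \tfrac{1}{2}\bigl[\overset{1}{R}ic(Y,AX)AZ - \overset{3}{R}ic(Z,AY)AX - \overset{3}{'R}(AZ,X)AY\bigr] = \overset{g}{R}(X,Y)Z + \tfrac{1}{2}\bigl[\overset{g}{R}ic(Y,AX)AZ - \overset{g}{R}ic(Z,AY)AX\bigr] + \tfrac{1}{2}\pi(Y)\bigl[\pi(Z)X - \pi(X)Z\bigr].
\end{equation*}
The left-hand side already contains the last group of vector-valued terms in the definition of $\overset{5}{H}$ (\ref{eq:K5XYZgmAHM}); only the scalar-valued $\pi$-bilinear on the right-hand side still depends on $\pi$.

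Second, I would take the trace in $X$ of the identity above. Using $\operatorname{tr}(A)=0$ on the K\"ahler manifold and the standard identity $\operatorname{Tr}\{X\mapsto f(X)V\}=f(V)$, this yields an explicit expression for $\pi(Y)\pi(Z)$ as a combination of $\overset{5}{R}ic(Y,Z)$, $\overset{1}{R}ic(Y,Z)$, $\overset{3}{'R}(AZ,AY)$, and $\overset{g}{R}ic(Y,Z)$, each divided by $n-1$. Writing the analogous formula with $Z$ replaced by $X$ and substituting both into the residual $\tfrac{1}{2}\pi(Y)[\pi(Z)X-\pi(X)Z]$, the Ricci-dependent pieces regroup into precisely the three anti-symmetric combinations $\tfrac{1}{n-1}(\overset{5}{R}ic(X,Y)Z-\overset{5}{R}ic(Y,Z)X)$, $-\tfrac{1}{2(n-1)}(\overset{1}{R}ic(X,Y)Z-\overset{1}{R}ic(Y,Z)X)$, and $-\tfrac{1}{2(n-1)}(\overset{3}{'R}(AY,AX)Z-\overset{3}{'R}(AZ,AY)X)$ that appear in (\ref{eq:K5XYZgmAHM}). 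Transposing them to the left-hand side assembles $\overset{5}{H}(X,Y)Z$ exactly, while the right-hand side becomes
\begin{equation*}
\overset{g}{R}(X,Y)Z + \tfrac{1}{2}\bigl[\overset{g}{R}ic(Y,AX)AZ - \overset{g}{R}ic(Z,AY)AX\bigr] + \tfrac{1}{2(n-1)}\bigl[\overset{g}{R}ic(X,Y)Z - \overset{g}{R}ic(Y,Z)X\bigr],
\end{equation*}
which involves only Levi-Civita curvature data and is therefore manifestly independent of the generator $\pi$.

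The main obstacle will be the algebraic bookkeeping: four Ricci-like tensors ($\overset{5}{R}ic, \overset{1}{R}ic, \overset{3}{R}ic, \overset{3}{'R}$) appear in various argument orders, several with $A$-twists, and one must verify that the precise coefficients in (\ref{eq:K5XYZgmAHM}) are exactly those forced by solving for $\pi(Y)\pi(Z)$ in the $X$-contraction. The preliminary identity $\overset{2}{D}(X,Y)-\overset{3}{D}(Y,X)=\overset{1}{D}(X,Y)$ is essential here, because it is what forces $\overset{1}{R}ic(Y,AX)AZ$ (rather than $\overset{3}{R}ic(Y,AX)AZ$) to appear with $AZ$ in the final formula, which is the non-obvious feature of the target expression for $\overset{5}{H}$.
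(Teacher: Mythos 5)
Your proposal is correct and follows essentially the same route as the paper's own proof: rewrite $\overset{5}{R}$ via the identity $\overset{1}{D}(X,Y)=\overset{2}{D}(X,Y)-\overset{3}{D}(Y,X)$ and $A^{2}=-I$, replace the $D$-tensors using (\ref{eq:D1Ric1}), (\ref{eq:D23}), (\ref{eq:D31}), contract in $X$ to solve for $\pi(Y)\pi(Z)$, and substitute back. The only cosmetic difference is that you take the $X$-trace after substituting the Ricci expressions rather than before, which yields exactly the paper's formula (\ref{eq:piYpiZ5}).
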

	\begin{proof}
		If we take into account that
		\begin{equation*}
			\overset{1}{D}(X,Y) =  \overset{2}{D} (X,Y) - \overset{3}{D} (Y,X),
		\end{equation*}
		where $\overset{1}{D}$, $\overset{2}{D}$, $\overset{3}{D}$ are given by (\ref{eq:D1}), (\ref{eq:D2}), (\ref{eq:D3}), respectively, then the curvature tensor of the fifth kind on the K\"ahler manifold with a quarter-symmetric metric $A$-connection (\ref{eq:Q-S-Gmetric}) takes the following form
		\begin{equation}
			\label{eq:R5XYZgmAHM}
			\begin{split}
				\overset{5}{R} (X,Y)Z =  \overset{g}{R} (X,Y)Z - \frac{1}{2}  \overset{1}{D}(X,Y) A Z    -  \frac{1}{2} \overset{3}{D} (X,Z) A Y + \frac{1}{2} \overset{2}{D} (Y,Z) A X - \frac{1}{2} \pi (Y) (\pi (X) Z - \pi (Z) X  ).
			\end{split}
		\end{equation}
		By contracting with respect to vector field $X$ in the previous equation gives
		\begin{equation*}
			\begin{split}
				\overset{5}{R}ic (Y,Z) = & \overset{g}{R}ic (Y,Z) - \frac{1}{2}\overset{1}{D} (AZ,Y)   -  \frac{1}{2} \overset{3}{D} (AY,Z) + \frac{n-1}{2} \pi (Y) \pi (Z).
			\end{split}
		\end{equation*}
		From here, by using equations (\ref{eq:D1Ric1}) and (\ref{eq:D31}), it follows that
		\begin{equation}
			\label{eq:piYpiZ5}
			\begin{split}
				\pi (Y) \pi (Z) =  \frac{1}{n-1} ( 	2\overset{5}{R}ic (Y,Z) -   \overset{1}{R}ic (Y,Z)  -  \overset{3}{'R} (AZ,AY)  -  \overset{g}{R}ic (Y,Z)).
			\end{split}
		\end{equation}
		By substituting equations (\ref{eq:D1Ric1}), (\ref{eq:D23}), (\ref{eq:D31}) and (\ref{eq:piYpiZ5}) into (\ref{eq:R5XYZgmAHM}), after rearranging, we obtain
		\begin{equation}
			\label{eq:R5XYZgmAHM2}
			\begin{split}
				\overset{5}{R} (X,Y)Z  & + \frac{1}{n-1} ( \overset{5}{R}ic (X,Y)Z - \overset{5}{R}ic (Y,Z)X )
				- \frac{1}{2(n-1)} ( \overset{1}{R}ic (X,Y)Z  - \overset{1}{R}ic (Y,Z)X )
				\\ & - \frac{1}{2(n-1)} (   \overset{3}{'R} (AY,AX)Z - \overset{3}{'R} (AZ,AY)X  ) 
				\\ & 
				+ \frac{1}{2} ( \overset{1}{R}ic (Y,AX)AZ - \overset{3}{R}ic (Z,AY)AX -  \overset{3}{'R} (AZ,X)AY  )
				\\ & =  \overset{g}{R} (X,Y)Z  + \frac{1}{2(n-1)} ( \overset{g}{R}ic (X,Y)Z - \overset{g}{R}ic (Y,Z)X )
				+ \frac{1}{2} ( \overset{g}{R}ic (AX,Y)AZ - \overset{g}{R}ic (AY,Z)AX)
			\end{split}
		\end{equation}
		and thereby, we proved the theorem.
	\end{proof}
	
	Analogously, we can prove the following theorem.
	
	\begin{theorem}
		Let $(\mathcal{M}, g, A)$ be a K\"ahler manifold with a quarter-symmetric metric $A$-connection (\ref{eq:Q-S-Gmetric}).
		\begin{itemize}
			\item[(1)] If $\overset{g}{\nabla} \pi$ and $\pi\otimes\pi$ are hybrid, then the curvature tensor of the fifth kind and structure tensor $A$ satisfies the following relations
			\begin{align*}
				\overset{5}{R} (X,Y,AZ,AW) & = \overset{5}{R} (AX,AY,Z,W), \\
				\overset{5}{R} (X,AY,AZ,W) & = \overset{5}{R} (AX,Y,Z,AW), \\
				\overset{5}{R} (AX,AY,AZ,AW) & = \overset{5}{R} (X,Y,Z,W). 
			\end{align*}
			
			\item[(2)] The curvature tensor of the fifth kind and structure tensor $A$ satisfies the following relations
			\begin{align*}
				\overset{5}{R} (X,Y)AZ & = A\overset{5}{R} (X,Y)Z, \\
				\overset{5}{R} (X,Y,Z,AW) & = - \overset{5}{R} (X,Y,AZ,W).
			\end{align*}
			if and only if
			\begin{equation*}
				\overset{3}{D} (X,Z)Y + \overset{3}{D} (X,AZ)AY= (\overset{2}{D} (Y,Z)+ \pi(Y)\pi(AZ))X + 	(\overset{2}{D} (Y,AZ) - \pi(Y)\pi(Z))AX  ,
			\end{equation*}
			where $\overset{2}{D}$, $\overset{3}{D}$ given by (\ref{eq:D2}), (\ref{eq:D3}), respectively.
		\end{itemize}
	\end{theorem}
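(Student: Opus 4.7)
The strategy parallels the proofs already given for the curvature tensors of the second, third and fourth kind. The starting point is the explicit form of $\overset{5}{R}$ on the K\"ahler manifold, namely equation (\ref{eq:R5XYZgmAHM}), which uses $A^2=-I$ together with the identity $\overset{1}{D}=\overset{2}{D}-\overset{3}{D}^{\top}$ to collapse the $\pi(Y)\pi(X)A^2Z$ and $\pi(Y)\pi(Z)A^2X$ terms of (\ref{eq:R5XYZgm1}) into the much shorter $-\frac{1}{2}\pi(Y)(\pi(X)Z-\pi(Z)X)$. Taking the $g$-image gives the $(0,4)$ form
\begin{equation*}
\overset{5}{R}(X,Y,Z,W)=\overset{g}{R}(X,Y,Z,W)-\tfrac{1}{2}\overset{1}{D}(X,Y)F(Z,W)-\tfrac{1}{2}\overset{3}{D}(X,Z)F(Y,W)+\tfrac{1}{2}\overset{2}{D}(Y,Z)F(X,W)-\tfrac{1}{2}\pi(Y)\bigl(\pi(X)g(Z,W)-\pi(Z)g(X,W)\bigr),
\end{equation*}
which is the workhorse for both parts.

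For part (1), I would replace the appropriate arguments by $AX,AY,AZ,AW$ in this expression, and form the differences of the two sides of each of the three claimed identities. The Riemannian part cancels by (\ref{eq:relation2forRg})--(\ref{eq:relation4forRg}). Every remaining term is of the form $D\cdot F$ or $\pi\!\otimes\!\pi\cdot g/F$; using $F(AZ,AW)=F(Z,W)$, $F(AZ,W)=-g(Z,W)$ from (\ref{eq:FAXY}) together with the hybrid identities (\ref{eq:Dhybrid}) for $\overset{1}{D}$ and (\ref{eq:thetaDhybrid}) for $\overset{2}{D},\overset{3}{D}$ and the hybridness of $\pi\!\otimes\!\pi$ (which is precisely one of the hypotheses), all cross-terms collapse to zero. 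This is routine bookkeeping completely analogous to the computation shown for $\overset{2}{R}$.

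For part (2), the two stated identities are in fact equivalent: since $F$ is skew-symmetric, $g(AU,W)=-g(U,AW)$, so
\begin{equation*}
\overset{5}{R}(X,Y,Z,AW)+\overset{5}{R}(X,Y,AZ,W)=g\bigl(\overset{5}{R}(X,Y)Z,AW\bigr)+g\bigl(\overset{5}{R}(X,Y)AZ,W\bigr)=g\bigl(\overset{5}{R}(X,Y)AZ-A\overset{5}{R}(X,Y)Z,W\bigr),
\end{equation*}
so both identities are equivalent to $\overset{5}{R}(X,Y)AZ=A\overset{5}{R}(X,Y)Z$. Then I would compute $\overset{5}{R}(X,Y)AZ-A\overset{5}{R}(X,Y)Z$ directly from (\ref{eq:R5XYZgmAHM}): the Riemannian piece vanishes by (\ref{eq:relation1forRg}), the $\overset{1}{D}(X,Y)AZ$ term cancels (since $A^2Z=-Z$ appears on both sides), and what remains, after multiplying through by $2$, is
\begin{equation*}
\bigl(\overset{3}{D}(X,Z)Y+\overset{3}{D}(X,AZ)AY\bigr)-\bigl((\overset{2}{D}(Y,Z)+\pi(Y)\pi(AZ))X+(\overset{2}{D}(Y,AZ)-\pi(Y)\pi(Z))AX\bigr),
\end{equation*}
where the $\pi$-pieces come from $\pi(Y)(\pi(X)A^2Z-\pi(X)A^2Z)$ canceling but the asymmetric $\pi(Z)$-type terms survive and repackage with the $\overset{2}{D}$ terms as shown. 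Vanishing of this expression is exactly the stated if-and-only-if condition.

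The one genuine obstacle is the $\pi$-bookkeeping in part (2): the identity collapses only after the correct pairing $(\overset{2}{D}(Y,Z)+\pi(Y)\pi(AZ))$ and $(\overset{2}{D}(Y,AZ)-\pi(Y)\pi(Z))$ is spotted; getting signs right requires systematic use of $A^2=-I$ and of the Kähler relation $F(AZ,W)=-g(Z,W)$, since several terms look superficially antisymmetric but turn symmetric once $A$ is pushed across. All other steps are mechanical substitutions into the Kähler identities (\ref{eq:relation1forRg})--(\ref{eq:relation5forRg}) and the hybridness relations (\ref{eq:Dhybrid}), (\ref{eq:thetaDhybrid}).
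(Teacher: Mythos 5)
Your proposal is correct and follows essentially the same route the paper intends: the paper omits an explicit proof here (stating only that it is proved ``analogously''), and the analogous arguments it does give for $\overset{1}{R}$ and $\overset{2}{R}$ proceed exactly as you do, by passing to the $(0,4)$ form of (\ref{eq:R5XYZgmAHM}), cancelling the Riemannian part via (\ref{eq:relation1forRg})--(\ref{eq:relation5forRg}), and absorbing the remaining terms with (\ref{eq:FAXY}), (\ref{eq:Dhybrid}), (\ref{eq:thetaDhybrid}). Your sign bookkeeping in part (2), including the pairing $(\overset{2}{D}(Y,Z)+\pi(Y)\pi(AZ))$ and $(\overset{2}{D}(Y,AZ)-\pi(Y)\pi(Z))$, checks out against a direct computation.
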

	
	\subsection{Curvature tensor of the zero kind}
	
	By the similar procedure as in the previous cases, using the curvature tensor of the zero kind, we can prove the following theorem.
	
	\begin{theorem}
		Let $(\mathcal{M}, g, A)$ be a K\"ahler manifold with a quarter-symmetric metric $A$-connection (\ref{eq:Q-S-Gmetric}). Tensor
		\begin{equation}\label{eq:K0XYZgmAHM}
			\begin{split}
				\overset{0}{H} (X,Y)Z  =  \overset{0}{R} (X,Y)Z  & + \frac{1}{n-1} ( \overset{0}{R}ic (X,Y)Z - \overset{0}{R}ic (Y,Z)X )
				- \frac{1}{2(n-1)} ( \overset{1}{R}ic (X,Z)Y  - \overset{1}{R}ic (Y,Z)X )
				\\ & - \frac{1}{4(n-1)} ( \overset{3}{R}ic (Z,X)Y  - \overset{3}{R}ic (Z,Y)X +  \overset{3}{'R} (AZ,AX)Y - \overset{3}{'R} (AZ,AY)X  ) 
				\\ & 
				+ \frac{1}{4} ( 2\overset{1}{R}ic (Y,AX)AZ + \overset{3}{R}ic (Z,AX)AY - \overset{3}{R}ic (Z,AY)AX ) 
				\\ &
				- \frac{1}{4} ( \overset{3}{'R} (AZ,X)AY - \overset{3}{'R} (AZ,Y)AX)
			\end{split}
		\end{equation}
		is independent of generator $\pi$.
	\end{theorem}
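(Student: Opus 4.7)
The strategy is the one that worked for $\overset{5}{H}$ in the preceding subsection, only with more $\overset{k}{D}$-bookkeeping: I rewrite $\overset{0}{R}(X,Y)Z$ so that its generator dependence is carried entirely by $\overset{1}{D}$, $\overset{2}{D}$, $\overset{3}{D}$, and $\pi\otimes\pi$, then substitute the Ricci-tensor expressions for each of these that were already established earlier in this section.

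I start from (\ref{eq:R0XYZgm1}). Since we are on a K\"ahler manifold, $A^{2}=-I$, so the last term of (\ref{eq:R0XYZgm1}) becomes $\tfrac{1}{4}\pi(Z)(\pi(Y)X-\pi(X)Y)$. From (\ref{eq:D0})--(\ref{eq:D3}) I read off two algebraic identities,
\begin{equation*}
    \overset{0}{D}(X,Y)-\overset{0}{D}(Y,X)=\overset{1}{D}(X,Y),\qquad \overset{0}{D}(X,Y)=\tfrac{1}{2}\bigl(\overset{2}{D}(X,Y)+\overset{3}{D}(X,Y)\bigr),
\end{equation*}
which let me replace the skew combination $\overset{0}{D}(X,Y)-\overset{0}{D}(Y,X)$ in front of $AZ$ by $\overset{1}{D}(X,Y)$, and the two remaining $\overset{0}{D}(X,Z)$, $\overset{0}{D}(Y,Z)$ terms by averages of $\overset{2}{D}$ and $\overset{3}{D}$.

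Next I contract this rewritten expression over $X$. The Levi-Civita piece yields $\overset{g}{Ric}(Y,Z)$; the $\overset{k}{D}(X,\cdot)AZ$ and $\overset{k}{D}(X,\cdot)AY$ pieces contribute $\overset{k}{D}(AZ,\cdot)$ and $\overset{k}{D}(AY,\cdot)$ respectively using $\mathrm{Tr}(X\mapsto\alpha(X)V)=\alpha(V)$; the $\overset{0}{D}(Y,Z)AX$ term drops out thanks to $\mathrm{Tr}(A)=0$; and the $\pi(Z)(\pi(Y)X-\pi(X)Y)$ piece contributes $\tfrac{n-1}{4}\pi(Y)\pi(Z)$. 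Applying (\ref{eq:D1Ric1}), (\ref{eq:D23}), (\ref{eq:D31}) together with $A^{2}=-I$ converts every remaining $\overset{k}{D}$ term into one of $\overset{1}{Ric}$, $\overset{3}{Ric}$, $\overset{3}{'R}$, and $\overset{g}{Ric}$ with appropriate $A$-placements, after which the resulting scalar identity can be solved for $\pi(Y)\pi(Z)$ as an explicit linear combination of $\overset{0}{Ric}(Y,Z)$, $\overset{g}{Ric}(Y,Z)$, $\overset{1}{Ric}(Y,Z)$, $\overset{3}{Ric}(Z,Y)$, and $\overset{3}{'R}(AZ,AY)$.

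Finally, I substitute this expression for $\pi\otimes\pi$, and the Ricci-tensor conversions of $\overset{1}{D}$, $\overset{2}{D}$, $\overset{3}{D}$, back into the simplified form of $\overset{0}{R}(X,Y)Z$, then move every generator-dependent (but now Ricci-expressed) summand to the left-hand side. The left-hand side is precisely $\overset{0}{H}(X,Y)Z$ of (\ref{eq:K0XYZgmAHM}), while the right-hand side involves only $\overset{g}{R}$, $\overset{g}{Ric}$, and $A$, hence is independent of $\pi$. The only genuine obstacle is bookkeeping: $\overset{0}{R}$ mixes three distinct slot patterns of $\overset{0}{D}$ (namely $(X,Y)$, $(X,Z)$, $(Y,Z)$), each converting through a different one of (\ref{eq:D1Ric1}), (\ref{eq:D23}), (\ref{eq:D31}), and since none of the $\overset{k}{D}$ is symmetric the argument order and the interior $A$-placements must be tracked carefully to obtain the exact coefficients $\tfrac{1}{n-1},\tfrac{1}{2(n-1)},\tfrac{1}{4(n-1)},\tfrac{1}{4}$ that appear in (\ref{eq:K0XYZgmAHM}).
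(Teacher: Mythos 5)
Your plan is correct and follows essentially the same route as the paper: rewrite $\overset{0}{R}$ via $\overset{1}{D}(X,Y)=\overset{0}{D}(X,Y)-\overset{0}{D}(Y,X)$ and $2\overset{0}{D}=\overset{2}{D}+\overset{3}{D}$ together with $A^2=-I$, contract over $X$ (using $\mathrm{Trace}\,A=0$) to solve for $\pi\otimes\pi$, and substitute (\ref{eq:D1Ric1}), (\ref{eq:D23}), (\ref{eq:D31}) back in so that the $\pi$-dependent terms collect into $\overset{0}{H}$ and the remainder involves only $\overset{g}{R}$, $\overset{g}{R}ic$ and $A$. This matches the paper's proof step for step, including the $\tfrac{n-1}{4}\pi(Y)\pi(Z)$ contribution from the contraction.
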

	\begin{proof}
		Based on equations (\ref{eq:D0}),(\ref{eq:D1}), (\ref{eq:D2}) and (\ref{eq:D3}), we have
		\begin{equation}\label{eq:relationsforDD0D2D3}
			\overset{1}{D}(X,Y) =  \overset{0}{D} (X,Y) - \overset{0}{D} (Y,X)  \quad \mbox{and} \quad 2\overset{0}{D} (X,Y) =  \overset{2}{D} (X,Y) +  \overset{3}{D} (X,Y).
		\end{equation}
		In view of equations (\ref{eq:R0XYZgm1}), (\ref{eq:conditionsforAHM}) and (\ref{eq:relationsforDD0D2D3}), the curvature tensor of the zero kind on the K\"ahler manifold with a quarter-symmetric metric $A$-connection (\ref{eq:Q-S-Gmetric}) takes the form
		\begin{equation}\label{eq:R0XYZgmAHM}
			\begin{split}
				\overset{0}{R} (X,Y)Z =  \overset{g}{R} (X,Y)Z & - \frac{1}{2} \overset{1}{D}(X,Y) A Z    -  \frac{1}{4} (\overset{2}{D} (X,Z) + \overset{3}{D} (X,Z)) A Y + \frac{1}{4} (\overset{2}{D} (Y,Z) + \overset{3}{D} (Y,Z)) A X 
				\\ & + \frac{1}{4} \pi (Z) (\pi (Y) X - \pi (X) Y  ),
			\end{split}
		\end{equation}
		where (0,2) type tensors $\overset{1}{D}$, $\overset{2}{D}$, $\overset{3}{D}$ are given by (\ref{eq:D1}), (\ref{eq:D2}), (\ref{eq:D3}), respectively. By contracting with respect to $X$ in the previous equation, we obtain
		\begin{equation}
			\label{eq:Ric0AHM}
			\begin{split}
				\overset{0}{R}ic (Y,Z) = & \overset{g}{R}ic (Y,Z)- \frac{1}{2}  {D} (AZ,Y)  -  \frac{1}{4} (\overset{2}{D} (AY,Z) + \overset{3}{D} (AY,Z)) + \frac{n-1}{4} \pi (Y) \pi (Z).
			\end{split}
		\end{equation}
		If we replace equations (\ref{eq:D1Ric1}), (\ref{eq:D23}), (\ref{eq:D31}) into (\ref{eq:Ric0AHM}), then we get
		\begin{equation}
			\label{eq:piYpiZ0}
			\begin{split}
				\pi (Y) \pi (Z) =  \frac{1}{n-1} ( 	4\overset{0}{R}ic (Y,Z) -   2\overset{1}{R}ic (Y,Z) - \overset{3}{R}ic (Z,Y) -  \overset{3}{'R} (AZ,AY)  -  \overset{g}{R}ic (Y,Z)).
			\end{split}
		\end{equation}
		Finally, by substituting (\ref{eq:D1Ric1}), (\ref{eq:D23}), (\ref{eq:D31}) and (\ref{eq:piYpiZ0}) into equation (\ref{eq:R0XYZgmAHM}), we obtain
		\begin{equation}\label{eq:H0Rg}
			\begin{split}
				\overset{0}{H} (X,Y)Z  =  \overset{g}{R} (X,Y)Z  & + \frac{1}{4(n-1)} ( \overset{g}{R}ic (X,Z)Y - \overset{g}{R}ic (Y,Z)X )
				\\ & + \frac{1}{4} ( 2\overset{g}{R}ic (AX,Y)AZ + \overset{g}{R}ic (AX,Z)AY - \overset{g}{R}ic (AY,Z)AX)
			\end{split}
		\end{equation}
		where $\overset{0}{H}$ is given by (\ref{eq:K0XYZgmAHM}).
	\end{proof}
	
	Now, we can give some other properties of the curvature tensor of the zero kind depending on generator $\pi$.
	
	\begin{theorem}
		Let $(\mathcal{M}, g, A)$ be a K\"ahler manifold with a quarter-symmetric metric $A$-connection (\ref{eq:Q-S-Gmetric}).
		\begin{itemize}
			\item[(1)] If $\overset{g}{\nabla} \pi$ and $\pi\otimes\pi$ are hybrid, then the curvature tensor of the zero kind and structure tensor $A$ satisfies the following relations
			\begin{align*}
				\overset{0}{R} (X,Y,AZ,AW) & = \overset{0}{R} (AX,AY,Z,W), \\
				\overset{0}{R} (X,AY,AZ,W) & = \overset{0}{R} (AX,Y,Z,AW), \\
				\overset{0}{R} (AX,AY,AZ,AW) & = \overset{0}{R} (X,Y,Z,W). 
			\end{align*}
			
			\item[(2)] If 
			\begin{equation*}
				(\overset{g}{\nabla}_{X} \pi)(Y) + \pi(X) \pi(AY) + \frac{1}{2} \pi(AX) \pi(Y) = 0,
			\end{equation*}
			then the curvature tensor of the zero kind and structure tensor $A$ satisfies the following relations
			\begin{align*}
				\overset{0}{R} (X,Y)AZ & = A\overset{0}{R} (X,Y)Z, \\
				\overset{0}{R} (X,Y,Z,AW) & = - \overset{0}{R} (X,Y,AZ,W).
			\end{align*}		
		\end{itemize}
	\end{theorem}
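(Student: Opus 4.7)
The plan is to follow the same pattern used earlier in Section~4 for $\overset{1}{R},\dots,\overset{5}{R}$: start from the specialized K\"ahler-manifold expression (\ref{eq:R0XYZgmAHM}) for $\overset{0}{R}$, and reduce each claim to a purely algebraic identity in the tensors $\overset{1}{D}$, $\overset{2}{D}$, $\overset{3}{D}$, and $\pi\otimes\pi$. The $\overset{g}{R}$-contributions will be absorbed at the outset by the K\"ahler identities (\ref{eq:relation1forRg})--(\ref{eq:relation5forRg}); the rest is a careful matching of terms using $A^{2}=-I$, the skew-symmetry of $F$, the relations $g(AX,Y)=-g(X,AY)$ and $F(AX,AY)=F(X,Y)$, and the two hybrid hypotheses.

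For part (1), I would first contract (\ref{eq:R0XYZgmAHM}) with $W$ to obtain the $(0,4)$-form
\begin{equation*}
    \overset{0}{R}(X,Y,Z,W) = \overset{g}{R}(X,Y,Z,W) - \tfrac{1}{2}\overset{1}{D}(X,Y)F(Z,W) - \tfrac{1}{4}(\overset{2}{D}+\overset{3}{D})(X,Z)F(Y,W) + \tfrac{1}{4}(\overset{2}{D}+\overset{3}{D})(Y,Z)F(X,W) + \tfrac{1}{4}\pi(Z)\bigl(\pi(Y)g(X,W) - \pi(X)g(Y,W)\bigr).
\end{equation*}
Under the hypothesis, (\ref{eq:Dhybrid}) makes $\overset{1}{D}$ hybrid and (\ref{eq:thetaDhybrid}) makes $\overset{2}{D}$ and $\overset{3}{D}$ hybrid, while $\pi\otimes\pi$ is hybrid by assumption. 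Each of the three symmetry statements is then proved by substituting the appropriate arguments with their $A$-images, subtracting the two sides, and checking term-by-term cancellation: the $\overset{g}{R}$-difference vanishes by (\ref{eq:relation2forRg})--(\ref{eq:relation4forRg}); the $\overset{1}{D}$-, $\overset{2}{D}$-, $\overset{3}{D}$-pieces vanish by hybrid-ness combined with $F(AZ,AW)=F(Z,W)$ and $F(AX,W)=-F(X,AW)$; and the quadratic $\pi$-piece vanishes by hybrid-ness of $\pi\otimes\pi$ together with $g(AX,W)=-g(X,AW)$.

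For part (2), I would substitute $Z\mapsto AZ$ in (\ref{eq:R0XYZgmAHM}), apply $A$ on the left of the original equation, and subtract. The $\overset{g}{R}$-pieces cancel by (\ref{eq:relation1forRg}). The $\overset{1}{D}(X,Y)AZ$ piece is self-compensating because $A^{2}=-I$ makes $A(AZ)=-Z$ match the $A$-application on the right. The remaining residue involves only $(\overset{2}{D}+\overset{3}{D})$ evaluated in various slots and the quadratic $\pi$-term. Using (\ref{eq:D0})--(\ref{eq:D3}) together with (\ref{eq:relationsforDD0D2D3}), the hypothesis $(\overset{g}{\nabla}_{X}\pi)(Y)+\pi(X)\pi(AY)+\tfrac{1}{2}\pi(AX)\pi(Y)=0$ is equivalent to the clean identity
\begin{equation*}
    (\overset{2}{D}+\overset{3}{D})(X,Y) = -\pi(X)\pi(AY).
\end{equation*}
Substituting this into the residual expression turns every remaining term into a copy of $\pi(X)\pi(AZ)$, $\pi(Y)\pi(AZ)$, $\pi(X)\pi(Z)$ or $\pi(Y)\pi(Z)$ contracted with $AX$, $AY$, $X$, or $Y$, and the four pairs cancel identically. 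The companion identity $\overset{0}{R}(X,Y,Z,AW)=-\overset{0}{R}(X,Y,AZ,W)$ follows by pairing with $W$ and using $g(AU,W)=-g(U,AW)$.

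The main obstacle is purely bookkeeping: the formula (\ref{eq:R0XYZgmAHM}) carries five distinct correction terms beyond $\overset{g}{R}$, each of which splits into several summands under the substitutions $Z\mapsto AZ$ or $W\mapsto AW$, so the verification amounts to tracking roughly twenty sign-decorated terms and confirming pairwise cancellation. The only conceptual step is the observation that the asymmetric-looking hypothesis in (2) is exactly the condition that forces $\overset{2}{D}+\overset{3}{D}$ to be the decomposable tensor $-\pi\otimes(\pi\circ A)$, which is the precise algebraic shape needed for the residual sums to collapse.
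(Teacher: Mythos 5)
Your proposal is correct and follows essentially the same route as the paper: a direct substitution into the specialized expression for $\overset{0}{R}$, cancelling the $\overset{g}{R}$-parts via the K\"ahler identities and the remaining terms via the hybrid properties of the $\overset{\theta}{D}$'s and $\pi\otimes\pi$ (part (1)), respectively via the vanishing of the residual coefficients under the stated condition (part (2)). Your reformulation of the part-(2) hypothesis as $\overset{2}{D}+\overset{3}{D}=2\overset{0}{D}=-\pi\otimes(\pi\circ A)$ is a correct and slightly cleaner packaging of exactly the coefficient identities the paper verifies term by term.
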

	\begin{proof}
		Equation (\ref{eq:R0XYZgm1}) implies the following
		\begin{align*}
			\begin{split}
				\overset{0}{R} (X,Y,Z,AW) = & \overset{g}{R} (X,Y,Z,AW) - \frac{1}{2}   ( \overset{0}{D} (X,Y) - \overset{0}{D} (Y,X)  )g(Z,W)    -  \frac{1}{2} \overset{0}{D} (X,Z) g(Y,W)
				\\ & + \frac{1}{2} \overset{0}{D} (Y,Z) g(X,W)  - \frac{1}{4} \pi(Z) (\pi (Y) F(X,W) - \pi (X) F(Y,W)),
			\end{split}
			\\
			\begin{split}
				\overset{0}{R} (X,Y,AZ,W) = & \overset{g}{R} (X,Y,AZ,W) + \frac{1}{2}   ( \overset{0}{D} (X,Y) - \overset{0}{D} (Y,X)  )g(Z,W)    -  \frac{1}{2} \overset{0}{D} (X,AZ) F(Y,W) 
				\\ & + \frac{1}{2} \overset{0}{D} (Y,AZ) F(X,W)  + \frac{1}{4} \pi(AZ) (\pi (Y) g(X,W) - \pi (X) g(Y,W)).
			\end{split}
		\end{align*}
		Adding the previous equations and using equations (\ref{eq:D0}) and (\ref{eq:relation5forRg}), we obtain
		\begin{equation*}
			\begin{split}
				\overset{0}{R} (X,Y,Z,AW) + \overset{0}{R} (X,Y,AZ,W)  = & -\frac{1}{2} ((\overset{g}{\nabla}_{X} \pi)(Z) + \pi(X) \pi(AZ) + \frac{1}{2} \pi(AX) \pi(Z) ) g(Y,W) 
				\\ & + \frac{1}{2} ((\overset{g}{\nabla}_{Y} \pi)(Z) + \pi(Y) \pi(AZ) + \frac{1}{2} \pi(AY) \pi(Z) ) g(X,W)
				\\ & - \frac{1}{2} ((\overset{g}{\nabla}_{X} \pi)(AZ) - \pi(X) \pi(Z) + \frac{1}{2} \pi(AX) \pi(AZ) ) F(Y,W) 
				\\ & + \frac{1}{2} ((\overset{g}{\nabla}_{Y} \pi)(AZ) - \pi(Y) \pi(Z) + \frac{1}{2} \pi(AY) \pi(AZ) ) F(X,W). 
			\end{split}
		\end{equation*}
		If we assume that 
		\begin{equation*}
			(\overset{g}{\nabla}_{X} \pi)(Y) + \pi(X) \pi(AY) + \frac{1}{2} \pi(AX) \pi(Y) = 0
		\end{equation*}
		then it holds
		\begin{equation*}
			\overset{0}{R} (X,Y,Z,AW) = - \overset{0}{R} (X,Y,AZ,W).
		\end{equation*}
	\end{proof}
	
	\section{Some identities obtained from $\overset{\theta}{H}$ tensors}
	
	Based on the results above, we can see that only tensor $\overset{4}{H}$ is equivalent to the well-known Weyl projective curvature tensor. By combining the remaining tensors $\overset{\theta}{H}$, $\theta=0,1,2,3,5$, we will obtain some identities for the Weyl projective curvature tensor and the holomorphically projective curvature tensor. First, we will present Weyl projective curvature tensor as a linear combination of tensors $\overset{\theta}{H}$.
	
	\begin{theorem}\label{thm:identitiesforWgKM}
		Let $(\mathcal{M}, g, A)$ be a K\"ahler manifold with a quarter-symmetric metric $A$-connection (\ref{eq:Q-S-Gmetric}). The following relations hold
		\begin{align*}
			4\overset{0}{H} (X,Y)Z - 2 \overset{1}{H} (X,Y)Z - \overset{2}{H} (X,Y)Z & = \overset{g}{W} (X,Y)Z, \\
			2\overset{5}{H} (X,Y)Z - \overset{1}{H} (X,Y)Z + \overset{1}{H} (Y,Z)X & = \overset{g}{W} (X,Z)Y,
		\end{align*}
		where $\overset{0}{H}$, $\overset{1}{H}$, $\overset{2}{H}$, $\overset{5}{H}$ are given by (\ref{eq:K0XYZgmAHM}), (\ref{eq:K1XYZgmAHM}), (\ref{eq:K2XYZgmAHM}), (\ref{eq:K5XYZgmAHM}), respectively.
	\end{theorem}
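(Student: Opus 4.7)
The plan is to reduce both identities to purely Levi--Civita quantities by exploiting the fact that each $\overset{\theta}{H}$ has already been written in terms of $\overset{g}{R}$ and $\overset{g}{R}ic$ alone in the preceding subsections. Specifically, equation (\ref{eq:R1XYZgmAHM2}) gives $\overset{1}{H}(X,Y)Z=\overset{g}{R}(X,Y)Z+\overset{g}{R}ic(Y,AX)AZ$; equation (\ref{eq:R2XYZgmAHM3}) gives $\overset{2}{H}(X,Y)Z=\overset{g}{R}(X,Y)Z+\overset{g}{R}ic(AX,Z)AY-\overset{g}{R}ic(AY,Z)AX$; equation (\ref{eq:H0Rg}) gives $\overset{0}{H}$ in a similar closed form with a Weyl-type $\tfrac{1}{4(n-1)}$-piece plus three Ricci--$A$ terms; and equation (\ref{eq:R5XYZgmAHM2}) gives $\overset{5}{H}$ in the analogous way. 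So the two identities become algebraic linear combinations on the right-hand sides, and the goal is just to show that the non-Weyl terms cancel.

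For the first identity, I form $4\overset{0}{H}(X,Y)Z-2\overset{1}{H}(X,Y)Z-\overset{2}{H}(X,Y)Z$. The coefficients of $\overset{g}{R}(X,Y)Z$ add up as $4-2-1=1$, and the $AY$-term contributed by $4\overset{0}{H}$ is cancelled by the one in $-\overset{2}{H}$; the same happens for the $AX$-term. On the $AZ$-line I am left with $\bigl(2\overset{g}{R}ic(AX,Y)-2\overset{g}{R}ic(Y,AX)\bigr)AZ$, which vanishes because on a K\"ahler manifold the Ricci tensor is symmetric. What remains is precisely $\overset{g}{R}(X,Y)Z+\tfrac{1}{n-1}\bigl(\overset{g}{R}ic(X,Z)Y-\overset{g}{R}ic(Y,Z)X\bigr)=\overset{g}{W}(X,Y)Z$, as required.

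For the second identity, I form $2\overset{5}{H}(X,Y)Z-\overset{1}{H}(X,Y)Z+\overset{1}{H}(Y,Z)X$. Using the same expressions, Ricci symmetry again makes the $AZ$- and $AX$-coefficients vanish, leaving $\overset{g}{R}(X,Y)Z+\overset{g}{R}(Y,Z)X+\tfrac{1}{n-1}\bigl(\overset{g}{R}ic(X,Y)Z-\overset{g}{R}ic(Y,Z)X\bigr)$. The key move is the algebraic Bianchi identity $\overset{g}{R}(X,Y)Z+\overset{g}{R}(Y,Z)X+\overset{g}{R}(Z,X)Y=0$, which rewrites the first pair as $\overset{g}{R}(X,Z)Y$. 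A final application of $\overset{g}{R}ic(Y,Z)=\overset{g}{R}ic(Z,Y)$ in the denominator term matches the definition of $\overset{g}{W}(X,Z)Y$ in (\ref{eq:Weylptensor}).

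The hard part is not any single deep step but meticulous bookkeeping: one must track exactly where $A$ sits inside each $\overset{g}{R}ic$ slot so that the pairs $\overset{g}{R}ic(AX,Y)$ versus $\overset{g}{R}ic(Y,AX)$ and $\overset{g}{R}ic(AY,Z)$ versus $\overset{g}{R}ic(Z,AY)$ can be recognised as equal by Ricci symmetry and thereby eliminated. The two structural inputs that make everything collapse cleanly are (i) the K\"ahlerian symmetry of $\overset{g}{R}ic$, which kills the mixed $A$-terms, and (ii) the first Bianchi identity, which is the sole mechanism converting the surviving sum of two Riemann evaluations into the single $\overset{g}{R}(X,Z)Y$ needed on the right-hand side of the second identity.
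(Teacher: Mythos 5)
Your proposal is correct and takes essentially the same route as the paper: substitute the closed-form expressions (\ref{eq:R1XYZgmAHM2}), (\ref{eq:R2XYZgmAHM3}), (\ref{eq:H0Rg}) and (\ref{eq:R5XYZgmAHM2}) for the $\overset{\theta}{H}$ tensors, cancel the $A$-Ricci terms using the symmetry of $\overset{g}{R}ic$, and invoke the first Bianchi identity together with the skew-symmetry of $\overset{g}{R}$ to obtain $\overset{g}{W}(X,Z)Y$ in the second relation. (The paper only writes out the second identity explicitly, but its implicit treatment of the first matches your computation.)
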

	\begin{proof}
		With help of equations (\ref{eq:R1XYZgmAHM2}) and (\ref{eq:R5XYZgmAHM2}), we have
		\begin{equation*}
			2\overset{5}{H} (X,Y)Z - \overset{1}{H} (X,Y)Z + \overset{1}{H} (Y,Z)X  = \overset{g}{R} (X,Y)Z + \overset{g}{R} (Y,Z)X + \frac{1}{n-1} ( \overset{g}{R}ic (X,Y)Z - \overset{g}{R}ic (Y,Z)X ). 
		\end{equation*}
		By using the first Bianchi identity and skew-symmetric property of Riemannian curvature tensor $\overset{g}{R}$, we get
		\begin{equation*}
			\begin{split}
				2\overset{5}{H} (X,Y)Z - \overset{1}{H} (X,Y)Z + \overset{1}{H} (Y,Z)X & = \overset{g}{R} (X,Z)Y + \frac{1}{n-1} ( \overset{g}{R}ic (X,Y)Z - \overset{g}{R}ic (Z,Y)X ) \\
				& = \overset{g}{W} (X,Z)Y.
			\end{split}
		\end{equation*}
	\end{proof}
	
	If we use equation (\ref{eq:Riccihybrid}), then from (\ref{eq:H0Rg}), we get 
	\begin{equation}\label{eq:K0PgWgKM}
		\overset{0}{H} (X,Y)Z = \frac{n+2}{4}  \overset{g}{P}(X,Y)Z - \frac{n-2}{4} \overset{g}{W}(X,Y)Z,
	\end{equation}
	where $\overset{g}{W}$ is the Weyl projective curvature tensor (\ref{eq:Weylptensor}) and $\overset{g}{P}$ is the holomorphically projective curvature tensor given by equation (\ref{eq:holomorphicallytensorKahler}). From the previous equation, we can conclude the following.
	\begin{theorem}
		Let $(\mathcal{M}, g, A)$ be a K\"ahler manifold with a quarter-symmetric metric $A$-connection (\ref{eq:Q-S-Gmetric}). If tensor $\overset{0}{H}$, given by (\ref{eq:K0XYZgmAHM}), vanishes, then it holds that
		\begin{equation*}
			\overset{g}{P}(X,Y)Z = \frac{n-2}{n+2} \overset{g}{W}(X,Y)Z.
		\end{equation*}
	\end{theorem}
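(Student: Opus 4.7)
The plan is to reduce the statement to the preceding identity (\ref{eq:K0PgWgKM}), which asserts
\begin{equation*}
\overset{0}{H}(X,Y)Z = \frac{n+2}{4}\overset{g}{P}(X,Y)Z - \frac{n-2}{4}\overset{g}{W}(X,Y)Z.
\end{equation*}
Granting this identity, the hypothesis $\overset{0}{H}=0$ collapses it to $\tfrac{n+2}{4}\overset{g}{P}(X,Y)Z = \tfrac{n-2}{4}\overset{g}{W}(X,Y)Z$. Since the dimension satisfies $n=2k\geq 4$, the factor $(n+2)/4$ is nonzero, so dividing through yields the claimed formula $\overset{g}{P}(X,Y)Z = \tfrac{n-2}{n+2}\overset{g}{W}(X,Y)Z$. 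This is the entire argument; the theorem is a one-line corollary once (\ref{eq:K0PgWgKM}) is in hand.

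Thus the only nontrivial step is the verification of (\ref{eq:K0PgWgKM}) itself, which is already available from (\ref{eq:H0Rg}) in the proof of the previous theorem. To reconstruct it independently, I would expand $\overset{g}{P}$ and $\overset{g}{W}$ from their defining equations (\ref{eq:holomorphicallytensorKahler}) and (\ref{eq:Weylptensor}) and form the linear combination $\tfrac{n+2}{4}\overset{g}{P}-\tfrac{n-2}{4}\overset{g}{W}$. The coefficients on $\overset{g}{R}(X,Y)Z$ combine to $1$, and the coefficients on the $\overset{g}{R}ic(X,Z)Y-\overset{g}{R}ic(Y,Z)X$ terms telescope from $\tfrac{1}{4}-\tfrac{n-2}{4(n-1)}$ to $\tfrac{1}{4(n-1)}$, matching (\ref{eq:H0Rg}). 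The remaining $\overset{g}{R}ic(\cdot,A\cdot)A(\cdot)$ terms in $\overset{g}{P}$ are converted into the $\overset{g}{R}ic(A\cdot,\cdot)A(\cdot)$ form that appears in (\ref{eq:H0Rg}) via the hybrid property (\ref{eq:Riccihybrid}) of the Kähler Ricci tensor, $\overset{g}{R}ic(AX,Y)=-\overset{g}{R}ic(X,AY)$, which flips the sign and matches the $\tfrac{1}{4}$ coefficient in (\ref{eq:H0Rg}).

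There is essentially no obstacle here. The only place where care is needed is the bookkeeping with the hybrid Ricci identity, since the three terms $2\overset{g}{R}ic(AX,Y)AZ$, $\overset{g}{R}ic(AX,Z)AY$, and $-\overset{g}{R}ic(AY,Z)AX$ in $\overset{0}{H}$ must be matched sign-by-sign against the three terms $\overset{g}{R}ic(X,AZ)AY$, $-\overset{g}{R}ic(Y,AZ)AX$, and $2\overset{g}{R}ic(X,AY)AZ$ appearing (with a minus) in the definition of $\overset{g}{P}$. Once the signs line up under (\ref{eq:Riccihybrid}), the identity (\ref{eq:K0PgWgKM}) is confirmed and the theorem follows instantly.
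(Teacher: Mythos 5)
Your proposal is correct and follows the paper's own route exactly: the theorem is an immediate consequence of the identity (\ref{eq:K0PgWgKM}), which the paper likewise obtains by combining (\ref{eq:H0Rg}) with the hybrid Ricci property (\ref{eq:Riccihybrid}), and your coefficient bookkeeping ($\tfrac{1}{4}-\tfrac{n-2}{4(n-1)}=\tfrac{1}{4(n-1)}$, sign flips on the three $A$-terms) checks out. Nothing is missing.
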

	
	From equations (\ref{eq:R1XYZgmAHM2}) and (\ref{eq:R2XYZgmAHM3}), we obtain identity
	\begin{equation*}
		2\overset{1}{H} (X,Y)Z + \overset{2}{H} (X,Y)Z = (n+2)\overset{g}{P} (X,Y)Z - (n-1) \overset{g}{W} (X,Y)Z
	\end{equation*}
	from which we conclude that the following statement holds.
	\begin{theorem}
		Let $(\mathcal{M}, g, A)$ be a K\"ahler manifold with a quarter-symmetric metric $A$-connection (\ref{eq:Q-S-Gmetric}). If tensors $\overset{1}{H}$ and $\overset{2}{H}$, given by (\ref{eq:K1XYZgmAHM}) and (\ref{eq:K2XYZgmAHM}), respectively, vanish, then it holds that
		\begin{equation*}
			\overset{g}{P} (X,Y)Z = \frac{n-1}{n+2} \overset{g}{W} (X,Y)Z.
		\end{equation*}
	\end{theorem}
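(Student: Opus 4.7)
The plan is to reduce the theorem directly to the algebraic identity
$$2\overset{1}{H}(X,Y)Z + \overset{2}{H}(X,Y)Z = (n+2)\overset{g}{P}(X,Y)Z - (n-1)\overset{g}{W}(X,Y)Z$$
stated immediately before the theorem. Once this identity is in hand, the hypothesis $\overset{1}{H} \equiv 0$ and $\overset{2}{H} \equiv 0$ makes the left-hand side vanish, and dividing by $n+2$ gives exactly $\overset{g}{P}(X,Y)Z = \frac{n-1}{n+2}\overset{g}{W}(X,Y)Z$. So the entire work lies in verifying the identity, after which the theorem is a one-line corollary.

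To verify the identity, I would start from equations (\ref{eq:R1XYZgmAHM2}) and (\ref{eq:R2XYZgmAHM3}), which, together with definitions (\ref{eq:K1XYZgmAHM}) and (\ref{eq:K2XYZgmAHM}), give
$$\overset{1}{H}(X,Y)Z = \overset{g}{R}(X,Y)Z + \overset{g}{R}ic(Y,AX)\,AZ,$$
$$\overset{2}{H}(X,Y)Z = \overset{g}{R}(X,Y)Z + \overset{g}{R}ic(AX,Z)\,AY - \overset{g}{R}ic(AY,Z)\,AX.$$
Forming the combination $2\overset{1}{H}+\overset{2}{H}$ produces
$$3\,\overset{g}{R}(X,Y)Z + 2\,\overset{g}{R}ic(Y,AX)\,AZ + \overset{g}{R}ic(AX,Z)\,AY - \overset{g}{R}ic(AY,Z)\,AX.$$

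On the other side, I would expand $(n+2)\overset{g}{P}-(n-1)\overset{g}{W}$ using the definitions (\ref{eq:holomorphicallytensorKahler}) and (\ref{eq:Weylptensor}). The $\overset{g}{R}$-terms combine to $3\,\overset{g}{R}(X,Y)Z$; the symmetric Ricci terms $\overset{g}{R}ic(X,Z)Y - \overset{g}{R}ic(Y,Z)X$ appear with coefficient $+1$ from $\overset{g}{P}$ and $-1$ from $\overset{g}{W}$ and cancel; what remains are the three $A$-twisted Ricci terms
$$-\overset{g}{R}ic(X,AZ)\,AY + \overset{g}{R}ic(Y,AZ)\,AX - 2\,\overset{g}{R}ic(X,AY)\,AZ.$$
Applying the hybrid relation (\ref{eq:Riccihybrid}) together with the symmetry of the K\"ahler Ricci tensor converts $\overset{g}{R}ic(X,AZ) = -\overset{g}{R}ic(AX,Z)$, $\overset{g}{R}ic(Y,AZ)=-\overset{g}{R}ic(AY,Z)$, and $\overset{g}{R}ic(X,AY)=-\overset{g}{R}ic(Y,AX)$, after which the two expressions match term by term, proving the identity.

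I do not anticipate any serious obstacle: the cancellation of the non-twisted Ricci terms is what matches the coefficients $n+2$ and $n-1$, and the hybrid identity is precisely what allows the twisted Ricci terms to align. The only bookkeeping care needed is in applying symmetry before the hybrid rule so that the sign conventions agree with the expressions arising from $\overset{1}{H}$ and $\overset{2}{H}$. Once the identity is established, the theorem follows immediately by setting $\overset{1}{H}=\overset{2}{H}=0$.
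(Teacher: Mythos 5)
Your proposal is correct and follows the same route as the paper: the paper likewise derives the identity $2\overset{1}{H}+\overset{2}{H}=(n+2)\overset{g}{P}-(n-1)\overset{g}{W}$ from equations (\ref{eq:R1XYZgmAHM2}) and (\ref{eq:R2XYZgmAHM3}) together with the hybridity (\ref{eq:Riccihybrid}) and symmetry of $\overset{g}{R}ic$, and then reads off the theorem by setting $\overset{1}{H}=\overset{2}{H}=0$. Your expansion and sign bookkeeping check out; you simply supply the term-by-term verification that the paper leaves implicit.
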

	
	Using theorems \ref{thm:H4KM} and \ref{thm:identitiesforWgKM}, based on equation (\ref{eq:K0PgWgKM}), we can represent the  holomorphically projective curvature tensor as a linear combination of tensors $\overset{\theta}{H}$, $\theta=0,1,\dots,5$.
	\begin{corollary}
		Let $(\mathcal{M}, g, A)$ be a K\"ahler manifold with a quarter-symmetric metric $A$-connection (\ref{eq:Q-S-Gmetric}). The following relations hold
		\begin{align*}
			\overset{g}{P} (X,Y)Z = & \frac{4}{n+2}\overset{0}{H} (X,Y)Z +\frac{n-2}{n+2} \overset{4}{H} (X,Y)Z, \\
			\overset{g}{P} (X,Y)Z = & \frac{4(n-1)}{n+2}\overset{0}{H} (X,Y)Z - \frac{2(n-2)}{n+2} \overset{1}{H} (X,Y)Z - \frac{n-2}{n+2} \overset{2}{H} (X,Y)Z, \\
			\overset{g}{P} (X,Y)Z = & \frac{4}{n+2}\overset{0}{H} (X,Y)Z +\frac{n-2}{n+2} (	2\overset{5}{H} (X,Z)Y - \overset{1}{H} (X,Z)Y + \overset{1}{H} (Z,Y)X ).
		\end{align*}
		
	\end{corollary}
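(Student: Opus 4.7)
The plan is to prove all three identities as direct consequences of equation (\ref{eq:K0PgWgKM}), using Theorem \ref{thm:H4KM} and Theorem \ref{thm:identitiesforWgKM} to express the Weyl projective curvature tensor $\overset{g}{W}$ in three different ways via the $\overset{\theta}{H}$ tensors. The starting point is to invert (\ref{eq:K0PgWgKM}) to obtain
\begin{equation*}
\overset{g}{P}(X,Y)Z = \frac{4}{n+2}\overset{0}{H}(X,Y)Z + \frac{n-2}{n+2}\overset{g}{W}(X,Y)Z,
\end{equation*}
which is the master identity into which we will substitute three different expressions for $\overset{g}{W}$.

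For the first identity, I would substitute $\overset{g}{W}(X,Y)Z = \overset{4}{H}(X,Y)Z$ as given by Theorem \ref{thm:H4KM}; this gives the first claimed formula immediately.

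For the second identity, I would substitute the first relation of Theorem \ref{thm:identitiesforWgKM}, namely $\overset{g}{W}(X,Y)Z = 4\overset{0}{H}(X,Y)Z - 2\overset{1}{H}(X,Y)Z - \overset{2}{H}(X,Y)Z$, into the master identity, and then combine the $\overset{0}{H}$ coefficients via $\frac{4}{n+2} + \frac{4(n-2)}{n+2} = \frac{4(n-1)}{n+2}$ to obtain the desired linear combination. For the third identity, I would use the second relation of Theorem \ref{thm:identitiesforWgKM}, with the variables $Y$ and $Z$ swapped, to write $\overset{g}{W}(X,Y)Z = 2\overset{5}{H}(X,Z)Y - \overset{1}{H}(X,Z)Y + \overset{1}{H}(Z,Y)X$, and again substitute into the master identity.

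Since all three identities rest on algebraic rearrangements of previously-established equalities, no genuinely hard step is expected; the main point of care is bookkeeping of the vector-field arguments when cycling them in the second identity of Theorem \ref{thm:identitiesforWgKM}, so that the final arguments in the third claimed formula ($(X,Z)Y$, $(X,Z)Y$, $(Z,Y)X$) match correctly. No hypotheses beyond those of the parent theorems are needed, so the corollary follows without further assumption on the generator $\pi$.
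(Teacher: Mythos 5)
Your proposal is correct and follows exactly the route the paper intends: it combines equation (\ref{eq:K0PgWgKM}) with Theorem \ref{thm:H4KM} and the two relations of Theorem \ref{thm:identitiesforWgKM}, and your coefficient bookkeeping (including the $\frac{4}{n+2}+\frac{4(n-2)}{n+2}=\frac{4(n-1)}{n+2}$ combination and the $Y\leftrightarrow Z$ swap in the third identity) checks out.
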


	\section{Conclusion and further work}
Observing a K\"ahler manifold with a quarter-symmetric metric $A$-connection, we determined a tensor that are independent of generator $\pi$. By using newly obtained tensors $\overset{\theta}{H}$, $\theta=0,1,\dots,5$, we established some relationships between the Weyl projective curvature tensor and the holomorphically projective curvature tensor. Also, we presented them as a linear combination of tensors $\overset{\theta}{H}$. Analogously, the identities for the second holomorphically projective curvature tensor obtained by M. Prvanovi\'c in \cite{prvanovic2005} can be determined.
	
On the other hand, we observed the case when $\overset{g}{\nabla} \pi$ and $\pi\otimes\pi$ are hybrid tensors and we determined which properties are satisfied by all linearly independent curvature tensors.
	
In future work, we will try to find some more properties of the tensors $\overset{\theta}{H}$, as well as their application. This research on the quarter-symmetric connection will be continued on an almost para-Hermitian and on a para-K\"ahler manifold.

	\section{Acknowledgement}
	The financial support of this research by the projects of the Ministry of Education, Science and Technological Development of the Republic of Serbia (project no. 451-03-9/2021-14/200124 for Milan Lj. Zlatanovi\'c and project no. 451-03-9/2021-14/200123 for Miroslav D. Maksimovi\'c) and by project of Faculty of Sciences and Mathematics, University of Pri\v stina in Kosovska Mitrovica (internal-junior project IJ-0203).

\end{document}